\documentclass[journal]{assets/IEEEtran}

\usepackage[english]{babel}

\RequirePackage[T1]{fontenc}
\usepackage{graphicx}
\usepackage{amsfonts}
\usepackage{algorithm}
\usepackage{algorithmic}
\usepackage{booktabs}
\usepackage{multirow}
\usepackage{amsthm}
\newtheorem{definition}{Definition}
\newtheorem{theorem}{Theorem}
\newtheorem{proposition}{Proposition}
\usepackage{amsopn}

\usepackage{amsmath}
\usepackage{amssymb}
\usepackage{yhmath}
\usepackage{mathrsfs}
\usepackage{enumerate}
\usepackage{bm}
\usepackage{float}
\usepackage{placeins}

\usepackage{xcolor}
\usepackage{booktabs}
\usepackage{tabularx}
\usepackage{multirow}
\usepackage{amsmath,amssymb}
\usepackage{etoolbox}
\usepackage{threeparttable}

\usepackage{txfonts}

\usepackage[colorlinks=true,
linkcolor=blue,
citecolor=red,
urlcolor=magenta,
backref=page
]{hyperref}

\makeatletter
\def\@IEEEBIOskipN{0.25\baselineskip}
\patchcmd{\IEEEbiography}
  {\vskip \@IEEEBIOskipN plus 1fil minus 0\baselineskip}
  {\vskip \@IEEEBIOskipN plus 0.1\baselineskip minus 0.1\baselineskip}
  {}{}
\makeatother

\begin{document}
\title{LLCT: Learnable Linear Canonical Transform Family with Stability for Visual Feature Processing}

\author{Zunwei~Fu, \textit{Member, IEEE},
        Zhen~Wang, \textit{Member, IEEE},
        Dachun~Yang,
        and~Shuhui~Yang
\thanks{This work was supported in part by the National Natural Science Foundation of China (Grant Nos. 12431006, 12371093, and 12601171),  the Beijing Natural Science Foundation (Grant No. 1262011), the Fundamental Research Funds for the Central Universities (Grant No. 2253200028), the Shandong Natural Science Foundation (Grant No. 2025TSGCCZZB0890), and the National Research Foundation of Korea (Grant No. RS-2026-25474453).}
\thanks{Zunwei Fu is with the Research Center for Big Data and Artificial Intelligence, Linyi University, Linyi 276005, The People's Republic of China, and also with the Department of Computer Science, The University of Suwon, Hwaseong-si, Gyeonggi-do, Republic of Korea (e-mail: zwfu@suwon.ac.kr).}
\thanks{Zhen Wang is with the School of Mathematics and Statistics, Linyi University, Linyi 276005, The People's Republic of China (\mbox{e-mail:} zanewang.\allowbreak research@\allowbreak hotmail.com).}
\thanks{Dachun Yang is with the Laboratory of Mathematics and Complex Systems (Ministry of Education of China),
	School of Mathematical Sciences, Institute for Advanced Study,
	Beijing Normal University, Beijing 100875,
	The People's Republic of China (e-mail: dcyang@bnu.edu.cn).}
\thanks{Shuhui Yang is with the Department of Mathematics, Jinzhong University,  Jinzhong 030619,  The People's Republic of China (e-mail: yangshuhui1@jzxy.edu.cn).}}

\maketitle
\begin{abstract}
Visual structures are often characterized by directional geometry, local oscillation, and spatial--frequency coupling. To overcome the limitations of conventional spatial convolutions mainly operating on fixed Euclidean coordinates, this article develops the learnable linear canonical transform (LLCT) framework that learns parameters within the linear canonical transform (LCT) family. The framework treats the LCT family as parameterized analytical views spanning spatial, Fourier, and intermediate linear canonical representations. The framework has the following two complementary modes. The Linear Canonical Riesz Transform (LCRT) mode combines learned linear canonical coordinates with regularized Riesz-type directional responses at sites chosen for geometric and local-structural refinement. The learnable LCT mode provides the integrated responses for latent feature sites. We establish the parameter stability of the LCT representation under admissible perturbations of the canonical matrices. Both modes use chirp multiplication and the fast Fourier transform (FFT) to preserve feature dimensions and support automatic differentiation. Experiments on crack segmentation and single-image deraining evaluate the above two modes with multiple backbones. On four segmentation backbones, the full LCRT mode gives the highest reported point estimates of F1 and IoU for the crack class among the tested variants. The learnable LCT mode gives higher values in 30 of 32 paired PSNR and SSIM measurements across four deraining backbones and four datasets. In ablation studies, the full model has the highest reported PSNR and SSIM among five tested configurations.
\end{abstract}

\begin{IEEEkeywords}
  Linear canonical transform, parameter stability, learnable transform, Riesz transform, visual feature processing.
\end{IEEEkeywords}

\IEEEpeerreviewmaketitle

\section{Introduction}

\IEEEPARstart{D}{eep} visual features encode local structures such as oriented edges, contours, oscillations, and frequency shifts, and the expression of these structures varies across spatial locations and network stages. Under the manifold assumption, natural visual representations are commonly modeled as lying on a low-dimensional manifold embedded in a high-dimensional ambient space \cite{Li2025DenoisingGenerativeModels}. From this perspective, effective representations must not only capture the strength of local responses, but also organize their variation in coordinates, which makes the intrinsic structure explicit and compact \cite{Peng2025NatureMachineIntelligence}. Note that conventional convolutions describe local interactions on a fixed Euclidean grid \cite{LeCun2015DeepLearning}, whereas standard spectral operators express the same features in fixed frequency coordinates. Used either alone, these fixed coordinate systems do not provide an explicit mechanism for learning alternative analytical coordinate views at selected feature sites.

Analytical coordinates provide an explicit way to organize visual structure: a coordinate system determines how directional variation, local oscillation, and spatial--frequency relations are expressed, which in turn affects how readily downstream layers can distinguish and recombine task-relevant feature components.  For example, Fourier operators expose spectral composition, wavelets preserve multiscale locality, Riesz transforms encode phase and orientation, and learned spectral operators parameterize interactions in a selected transform domain \cite{Plonka2023NumericalFourierAnalysis,Unser2009Monogenic,Li2021FourierNeuralOperator}. These approaches demonstrate the value of transform coordinates, but prior work generally uses fixed or narrowly parameterized coordinates. At the same time, the feature hierarchy presents two distinct feature conditions. Some features, especially at shallow or transitional stages, retain explicit directional geometry that can be refined with a directional prior. Others contain mixed latent structure in which local patterns interact with factors associated with the task or degradation. These observations inspire a natural  representation question: whether visual features can be organized within a unified analytical coordinate family that spans spatial, Fourier, and intermediate spatial--frequency representations, while remaining compatible with structure-sensitive operators such as Riesz/directional wavelet analysis. Such a representation framework would move beyond a fixed transform domain by allowing the coordinate configuration itself to participate in feature representation. In this setting, the analytical coordinates are learned jointly with deep features, and their resulting responses are used for different feature-processing purposes at selected network sites. Learning the canonical parameters also motivates examining how the transform representation depends on parameter perturbations.
We develop LLCT, a visual feature-processing framework based on the linear canonical transform (LCT) family. The LCT defines a  family, parameterized via a canonical matrix $A$, of analytical coordinates, which includes the spatial and the Fourier representations as endpoint configurations, together with intermediate configurations that introduce chirp modulation, scaling, or coupling between spatial and frequency coordinates \cite{Zayed2018LCTDiscSampling,Chen2024GraphLCT,He2025SymplecticWignerLCT}. Each canonical matrix $A$ specifies the geometry of a coordinate view. At each selected feature site \(s\), a learned matrix pair \(\boldsymbol A^{(s)}\) parameterizes the corresponding linear canonical view. Fig.~\ref{fig:newfig1} illustrates how different canonical matrices reexpress the same feature field.

\begin{figure}[t]
  \centering
  \includegraphics[width=\linewidth]{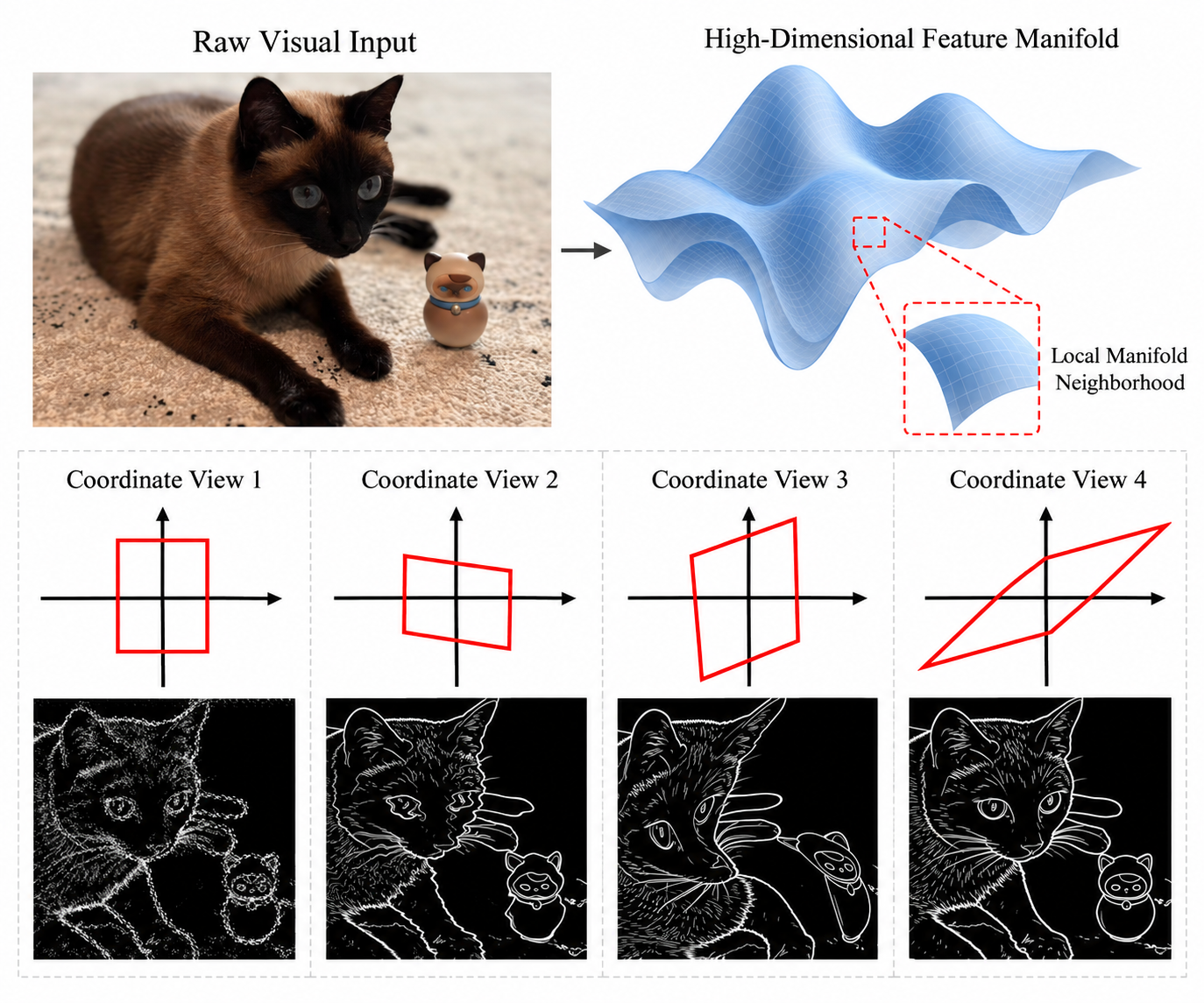}
  \caption{Different coordinate views in the LCT family reexpress the same local feature field while emphasizing different structural relations.}
  \label{fig:newfig1}
\end{figure}

 LLCT instantiates this transform family through two complementary modes: the linear canonical Riesz transform (LCRT) mode and the learnable LCT mode. In the LCRT mode, a Riesz-type directional multiplier operates within learned linear canonical coordinates, and produces responses sensitive to orientation for features in which edges, contours, and thin structures remain spatially explicit. The learnable LCT mode combines responses from spatial, Fourier, and learned linear canonical coordinates. Real and imaginary coefficients are projected jointly, allowing the learned response to depend on transform phase. The balance between the two endpoint responses regulates the contribution of the learned canonical branches. This mode targets latent representations whose structures depend jointly on space and frequency. Both modes convert their analytical responses into real-valued tensors with the same dimensions as the host features, and introduce the resulting correction through a common gated residual interface. This interface retains the original feature pathway, backbone blocks, prediction head, task objective, and input and output formats. To sum up, the main contributions of this article are as follows.

\begin{enumerate}[(i)]
  \item We formulate visual feature reparameterization through learnable LCTs. This formulation not only interprets the LCT as a parameterized family of connected analytical views rather than merely as an auxiliary transform-domain operator, but also provides the coordinate basis for the two modes developed in this article. We further establish the parameter stability of the continuous LCT, expressed as $L^2$-continuity with respect to the canonical matrices for each fixed Schwartz-class input on the nondegenerate parameter domain, as stated in Theorem~\ref{thm:lct-parameter-stability}.
  
  \item We implement LLCT using a canonical parameterization and piecewise differentiable discrete operations, supporting end-to-end, gradient-based optimization of the canonical parameters jointly with the host-network weights. The LCRT mode is deployed at selected shallow sites for implementing Riesz-type directional analysis, while the learnable LCT mode is deployed at selected latent or restoration sites to combine responses from spatial, Fourier, and learned linear canonical coordinates. Parameters are shared according to the selected site and the structure of the host block, and both modes remain compatible with existing deep architectures.
  
  \item Across multiple backbones, we respectively evaluate the LCRT mode on crack segmentation and the learnable LCT mode on image deraining. The matched comparisons examine feature reparameterization with the LCT family in the segmentation of thin structures and in image restoration. The observed improvements across these settings support the effectiveness of this framework across different visual tasks and backbone architectures.
\end{enumerate}

\section{Related Work}

The work most relevant to this study can be summarized into three closely related aspects, whose details are presented, respectively, in Subsections \ref{RW-A}, \ref{RW-B}, and \ref{RW-C}: analytical coordinate representations, Riesz-type structural analysis, and the use of structured transforms in learned visual models. Analytical transforms determine the coordinate systems in which feature variation is represented, Riesz-type operators make directional and phase-related structure explicit within those coordinates, and transform-based neural models examine how such analytical representations can be incorporated into feature learning. These three lines of work provide the background for learning linear-canonical coordinates and using their responses for visual feature reparameterization.

\subsection{Analytical Coordinate Families for Visual Representation}\label{RW-A}

Analytical transforms provide various coordinate families for signal representation. The Fourier transform gives a fixed spectral view, whereas the fractional Fourier transform (FrFT) provides a family, controlled by transform order, of intermediate representations between the spatial and the frequency domains \cite{Plonka2023NumericalFourierAnalysis,McBride1987NamiasFrFT}. The LCT broadens this family by parameterizing the transform through canonical matrices, whose different configurations yield different analytical coordinate views. Particularly, Spatial, Fourier,  fractional Fourier, and Fresnel representations correspond to particular configurations, while other configurations describe scaling, chirp modulation, and more general coupling between spatial and frequency coordinates \cite{Zayed2018LCTDiscSampling,Fu2027ACHA, Bhandari2012FrFTSampling}. This flexibility is particularly effective when nonstationary patterns are not well represented in a single fixed coordinate system. 

Sampling and reconstruction theories have been developed for signals bandlimited in the linear canonical transform domain \cite{Tao2008LCTSampling}. Beyond these theoretical developments, recent work has investigated parameter selection, feature construction, and filtering in generalized transform domains. Zhang \cite{Zhang2019LCWDLFMSNR} selected the free parameters of a linear canonical Wigner distribution through an output-SNR analysis for noisy LFM signal detection. More recently, Zhang et al. \cite{Zhang2025BroadLearningFractionalFeature} integrated features from multiple fractional Fourier orders, together with their interactions, into a broad learning system. For graph signals, Cui et al. \cite{Cui2025GraphChirpGFED} introduced graph chirp signals and a graph fractional vertex--frequency energy distribution for detection and denoising, and Chen et al. \cite{Chen2024GraphLCT} defined the graph linear canonical transform and considered graph-domain filters designed analytically or learned by stochastic gradient descent. These studies demonstrate the value of adapting generalized transform representations to particular signal models and objectives. Our formulation differs in both scope and mechanism, which learns site-specific canonical coordinate configurations end to end at selected locations within a visual feature hierarchy.

\subsection{Riesz-Type Transforms in Generalized Coordinates}\label{RW-B}

Riesz analysis mainly concerns the problem: making directional relations explicit after a coordinate system has been chosen. The classical Riesz transform extends analytic signal analysis to multiple dimensions. Its combination with an image forms the monogenic signal, which represents local amplitude, phase, and orientation \cite{Felsberg2001MonogenicSignal}. Riesz--Laplace wavelets place the same principle in a steerable multiresolution construction, and recover local structure across scale and orientation \cite{Unser2009Monogenic}. Riesz-type transforms thus characterize the directional geometric features of images through explicit analytical responses. 

Generalized Riesz operators alter the coordinates in which this directional analysis is performed. Recently, the fractional Riesz transform is defined through a chirp multiplier in terms of the FrFT, and its order controls the amplitude, phase, and directional information recovered in edge analysis \cite{Fu2023RieszFrFT}. Fractional Riesz potentials extend related operators associated with chirp modulation to other image processing settings, although they constitute a distinct operator family \cite{Fu2024FrFTRieszPotentials}. More recently, Yang et al. \cite{Yang2026LCRTImageEdgeDetection} extended Riesz transforms to linear canonical coordinates through the LCRT, and established their characterizations in terms of  multipliers. This work provides the mathematical foundation of the LCRT used in this article, but the article \cite{Yang2026LCRTImageEdgeDetection} only studies the analytical edge detector rather than learning canonical coordinates inside a neural feature hierarchy. Learning such coordinates jointly with deep features is a central methodological objective of the present article.

In addition, several learned models use representations derived from Riesz-type transforms. The Riesz-Quincunx-UNet integrates the Riesz-wavelet construction into a variational autoencoder for the denoising of satellite images \cite{Thai2023RieszQuincunxUNet}. A scale-equivariant Riesz representation has likewise been used in the hybrid deep models for classification \cite{Barisin2024RieszFeatureRepresentation}. In both cases, Riesz analysis contributes directly to the learned representation. In these aforementioned models, Riesz-type transforms are tied to a specific multiscale or architectural design, whereas our work learns linear canonical coordinates at selected network sites for different feature processing roles. 

\subsection{Analytical Transforms in Learned Visual Representations}\label{RW-C}

Structured transforms enter neural models as prescribed operators or parameterized components. Fourier neural operators provide another form of parameterization by learning spectral multipliers for mappings between function spaces \cite{Li2021FourierNeuralOperator}. Here the analytical structure is either fixed or partly learned, but the objective is invariance, the parameterization of wavelet filters, or operator learning rather than learning linear canonical coordinates for feature reparameterization at selected sites in a visual backbone. Trainable fractional transforms provide the closest context for the present work. Koç et al. \cite{Koc2024LSP} showed that the fractional order of the discrete FrFT can be optimized jointly with network weights through backpropagation. Yu et al. \cite{Yu2023NeurIPS} introduced multi-order fractional Fourier convolution, combining spatial, Fourier, and intermediate fractional-domain processing for visual tasks. These establish transform-parameter learning and multidomain feature processing. Different from them, the distinction in our formulation lies in the transform family being
learned. An FrFT order controls rotation in the spatial--frequency plane, whereas each canonical matrix in our model is parameterized by rotation, scaling, and shearing variables. Consequently, the learned coordinates can vary in both orientation and shape, with FrFT rotations retained as a special case.

Image restoration offers some  closer examples of feature processing in transform domains. FADformer combines the spatial and the Fourier domain operations for image deraining \cite{Gao2024FADformer}. Such results support processing in the frequency domain for image restoration, but the transform coordinates remain based on the Fourier transform, and the modules are designed for individual image restoration tasks. Their focus remains on task-specific processing in fixed transform coordinates, rather than on learning a parameterized canonical coordinate system for different representation roles at selected feature sites, different from the concerns of the present article.

\bigskip

To sum up, these studies show the value of analytical transforms in feature learning, while the use of learnable analytical coordinates within a visual feature hierarchy remains less explored. Some methods parameterize a coordinate or a filter, others apply Riesz-type analysis, and models designed for individual image tasks build transform blocks around a particular architecture. Building on these studies, we design LLCT, which combines the parameter continuity analysis of the continuous LCT with discrete modules that learn canonical parameters for visual feature processing. The LCRT mode uses its learned coordinates for Riesz-type directional refinement, while the learnable LCT mode combines responses from its learned coordinates with spatial and Fourier responses to explore latent features.

\section{Mathematical Foundations of LCT and LCRT}

This section fixes the transform conventions and notation used in the proposed framework. It defines the LCT coordinate family, establishes parameter stability on the stated domain in Subsection \ref{MF-A}, gives the LCRT multiplier formulation in Subsection \ref{MF-B}, and interprets the geometry of the canonical matrices needed to distinguish their mathematical roles in Subsection \ref{MF-C}. The LCRT results used below are taken from~\cite{Yang2026LCRTImageEdgeDetection}, and Proposition~\ref{prop:finite-grid-realization} of Appendix~\ref{bb} gives the finite-grid realization with its sensitivity bound.

\subsection{Linear Canonical Transforms and Parameter Stability}\label{MF-A}

We use the following Fourier transform convention. Let $\mathscr{S}(\mathbb{R}^n)$ denote the Schwartz space. For any $f \in \mathscr{S}(\mathbb{R}^n)$, its \emph{Fourier transform} $\mathscr{F}(f)$ or $\widehat{f}$ is defined for any $\boldsymbol{x} \in \mathbb{R}^n$ by
\begin{equation*}
  \widehat{f}(\boldsymbol{x})
  := \mathscr{F}(f)(\boldsymbol{x})
  := \frac{1}{(\sqrt{2\pi})^n}
  \int_{\mathbb{R}^n} f(\boldsymbol{y})
  e^{-i\boldsymbol{x}\cdot\boldsymbol{y}}
  d\boldsymbol{y},
\end{equation*}
and, following the notation for inverse transforms used in operator learning in the Fourier domain \cite{Li2021FourierNeuralOperator}, we denote
$
  f^\vee(\boldsymbol{x})
  :=
  \mathscr{F}^{-1}(f)(\boldsymbol{x})
  :=
  \widehat{f}(-\boldsymbol{x}).
$
With this notation, a Fourier neural operator (FNO) layer parameterizes its nonlocal kernel term through a learned Fourier multiplier,
\begin{equation}
  \label{eq:fno_fourier_multiplier}
  \mathcal{K}_{\phi}v
  =
  \mathscr{F}^{-1}
  \left(
  R_{\phi}\cdot\mathscr{F}v
  \right),
\end{equation}
where $R_{\phi}$ acts on selected spectral modes \cite{Li2021FourierNeuralOperator}. Equation~\eqref{eq:fno_fourier_multiplier} illustrates the parameterization of a multiplier in the transform domain followed by an inverse transform. The LCRT response in \eqref{eq:LCRT_multiplier} follows this operator pattern in LCT coordinates. The learnable LCT branch instead retains forward LCT coefficients and converts them into real response tensors for feature fusion, as specified in Section~\ref{siv}.

The LCT generalizes the fixed Fourier coordinate system to a family parameterized by canonical matrices, including spatial, Fourier, and intermediate linear canonical configurations \cite{Li2006HilbertLCT}.

Let $\boldsymbol{A}:=(A_1,A_2)$ with
$A_k:=\begin{bmatrix} a_k & b_k\\ c_k & d_k \end{bmatrix}\in{M_{2 \times 2}}(\mathbb{R}
  )$ ($M_{2\times 2}$ denotes the set of all $2\times 2$
real matrices with determinant equal to 1) for any $k\in\{1,2\}$, and let
$\boldsymbol{a}:=(a_1, a_2)$,
$\boldsymbol{b}:=(b_1, b_2)$,
and $\boldsymbol{d}:=(d_1, d_2)$.
We use this notation throughout.
The two-dimensional LCT on $\mathbb{R}^2$ is defined as follows.

\begin{definition}\label{2DLCT}
  For any $f \in \mathscr{S}({\mathbb{R}^2})$,
  the \emph{linear canonical transform}
  ${\mathscr{L}_{\boldsymbol{A}}}$
  is defined by setting, for any $\boldsymbol{u}\in\mathbb{R}^2$,
  \begin{equation}\label{eq:2d_lct}
    \mathscr{L}_{\boldsymbol{A}}(f)(\boldsymbol{u})
    :=
    \int_{\mathbb{R}^2}
    f(\boldsymbol{x})K_{\boldsymbol{A}}(\boldsymbol{x},\boldsymbol{u})
    \,d\boldsymbol{x},
  \end{equation}
  where, for any $\boldsymbol{x}:=(x_1,x_2),
    \boldsymbol{u}:=(u_1,u_2)\in \mathbb{R}^2$,
  \begin{equation*}
    K_{\boldsymbol{A}}(\boldsymbol{x},\boldsymbol{u})
    :=
    \prod_{k=1}^{2} K_{A_k}(x_k,u_k).
  \end{equation*}
  For each $k=1,2$, the kernel $K_{A_k}(x_k,u_k)$ is defined as
  \begin{equation*}
    K_{A_k}(x_k,u_k)=
    \begin{cases}
      \begin{aligned}
         & C_{A_k}\,e_{b_k,a_k}(x_k)                        \\
         & \quad\times e_{-b_k}(x_k,u_k)\,e_{b_k,d_k}(u_k),
      \end{aligned}
       & b_k\neq 0, \\[1.2ex]
      \sqrt{d_k}e^{\frac{i}{2}\left( {c_k}{d_k}u_k^2\right)}\delta(x_k-d_ku_k),
       & b_k=0,
    \end{cases}
  \end{equation*}
  with $C_{A_k}:=(i2\pi b_k)^{-1/2}$, $e_{b_k,a_k}(x_k):=e^{i a_kx_k^2/(2b_k)}$,
  $e_{-b_k}(x_k,u_k):=e^{-iu_kx_k/b_k}$, and
  $e_{b_k,d_k}(u_k):=e^{i d_ku_k^2/(2b_k)}$,
  where $\delta$ denotes the Dirac measure at $0$. All complex square roots are taken on the principal branch.

  Let $\boldsymbol{A}^{-1}:=(A_1^{-1},A_2^{-1})$,
  its \emph{inverse LCT}
  $({\mathscr{L}_{\boldsymbol{A}}})^{-1}(f)$
  in ${\mathbb{R}^2}$ is defined by setting, for any $\boldsymbol{x}\in \mathbb{R}^2$,
  $ \left({\mathscr{L}_{\boldsymbol{A}}}\right)^{-1}(f)
    (\boldsymbol x):={\mathscr{L}_{\boldsymbol{A}^{-1}}}(f)
    (\boldsymbol x).$
\end{definition}

Definition~\ref{2DLCT}, specifically \eqref{eq:2d_lct}, yields normalized forms of the Fourier transform, fractional Fourier transform, scaling operator, Fresnel transform, and chirp multiplication for particular choices of $\boldsymbol{A}:=(A_1,A_2)$. Transforms parameterized by analytical coordinates have been used in image and signal processing and in related problems in electromagnetics and wave propagation (see, for example, \cite{Abbas2017DFTPolarSphericalGrid,Bi2007WatermarkingEMD,Shen2004BiorthogonalWaveletReconstruction}). The LCT therefore provides a unified family of analytical coordinates that contains several classical representation forms as special cases. Here, the parameter stability refers to the \(L^2\)-continuity of the continuous LCT representation with respect to the canonical matrices for each fixed input in \(\mathscr S(\mathbb R^2)\). The following theorem establishes this property on the nondegenerate parameter domain \(b_k\ne0\), \(k\in\{1,2\}\).

\begin{theorem}\label{thm:lct-parameter-stability}
  Let
  $\boldsymbol A=(A_1,A_2)$ and
  $\boldsymbol A^{(m)}=(A_1^{(m)},A_2^{(m)})$, $m\in\mathbb N$,
  be canonical-matrix pairs, with
  $A_k=(\begin{smallmatrix}a_k&b_k\\c_k&d_k\end{smallmatrix})$ and
  $A_k^{(m)}
  =(\begin{smallmatrix}
    a_k^{(m)}&b_k^{(m)}\\
    c_k^{(m)}&d_k^{(m)}
  \end{smallmatrix})$
  for $k\in\{1,2\}$.
  Assume that
  $\det A_k=\det A_k^{(m)}=1$,
  $b_kb_k^{(m)}\neq0$, and
  $\boldsymbol A^{(m)}\to\boldsymbol A$ entrywise.
  Then, for every $f\in\mathscr{S}(\mathbb R^2)$,
  \begin{equation*}
    \lim_{m\to\infty}
    \left\|
    \mathscr{L}_{\boldsymbol A^{(m)}}(f)
    -
    \mathscr{L}_{\boldsymbol A}(f)
    \right\|_{L^2(\mathbb R^2)}
    =0.
  \end{equation*}
  Consequently, for each fixed
  $f\in\mathscr{S}(\mathbb R^2)$, the map
  $\boldsymbol A\mapsto\mathscr{L}_{\boldsymbol A}(f)$
  is continuous from
  $
    \Omega=
    \{
    (A_1,A_2):
    \det A_k=1,\ b_k\neq0,\ k=1,2
    \}
  $
  with the relative Euclidean topology into
  $L^2(\mathbb R^2)$.
\end{theorem}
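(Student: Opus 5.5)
The plan is to use the chirp--Fourier--chirp factorization of the kernel. On $\Omega$ (where $b_k\neq0$), Definition~\ref{2DLCT} gives, for $f\in\mathscr S(\mathbb R^2)$ and $\boldsymbol u\in\mathbb R^2$,
\begin{equation*}
  \mathscr L_{\boldsymbol A}(f)(\boldsymbol u)
  =\Big(\prod_{k=1}^2 C_{A_k}\sqrt{2\pi}\,e^{\frac{i d_k u_k^2}{2b_k}}\Big)\,
  \widehat{g_{\boldsymbol A}}\big(u_1/b_1,u_2/b_2\big),
  \qquad
  g_{\boldsymbol A}(\boldsymbol x):=f(\boldsymbol x)\prod_{k=1}^2 e^{\frac{i a_k x_k^2}{2b_k}} .
\end{equation*}
This splits $\mathscr L_{\boldsymbol A^{(m)}}(f)-\mathscr L_{\boldsymbol A}(f)$ into four telescoping pieces, each changing one ingredient at a time:
\begin{enumerate}[(i)]
  \item the constant $C_{A_k^{(m)}}$;
  \item the input chirp $e^{ia_kx_k^2/(2b_k)}$;
  \item the dilation $u_k/b_k$;
  \item the output chirp $e^{id_ku_k^2/(2b_k)}$.
\end{enumerate}
Each piece is then estimated in $L^2(\mathbb R^2)$. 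Two facts are used throughout. Plancherel holds for $\mathscr F$, and the dilation $h\mapsto h(\cdot/b)$ has $L^2$ norm $|b_1b_2|^{1/2}\|h\|_{L^2}$. Since $b_k^{(m)}\to b_k\neq0$, the quantities $|b_k^{(m)}|$ are bounded above and away from $0$ for large $m$, so all the norms involved stay uniformly bounded.

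\emph{Constant and input chirp.} $C_{A_k^{(m)}}\to C_{A_k}$ because the principal branch of $z\mapsto z^{-1/2}$ is continuous off the negative real axis, and $i2\pi b_k$ is purely imaginary and nonzero. For the input chirp, Plancherel reduces the problem to showing $\|g_{\boldsymbol A^{(m)}}-g_{\boldsymbol A}\|_{L^2}\to0$. This follows by dominated convergence: the integrand $|f|^2\,|e^{i\theta_m(\boldsymbol x)}-e^{i\theta(\boldsymbol x)}|^2$ is bounded by $4|f|^2\in L^1$ and converges pointwise, because $a_k^{(m)}/b_k^{(m)}\to a_k/b_k$.

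\emph{Output chirp and dilation.} The output-chirp term is handled the same way, now with dominating function $4|\widehat{g_{\boldsymbol A}}(\boldsymbol u/\boldsymbol b^{(m)})|^2$. Because the dilations converge, this is not a fixed function; I would instead dominate it by $C(1+|\boldsymbol u|)^{-N}$, which is legitimate since $\widehat{g_{\boldsymbol A}}\in\mathscr S$ and the $b_k^{(m)}$ lie in a compact set avoiding $0$. Pointwise convergence together with this integrable bound then gives the result.

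\emph{Main obstacle.} The dilation term $\|h(\cdot/\boldsymbol b^{(m)})-h(\cdot/\boldsymbol b)\|_{L^2}$ with $h=\widehat{g_{\boldsymbol A}}$ is the delicate step, since dilation is only strongly (not uniformly) continuous on $L^2$. Because $h$ is Schwartz here, it is enough to observe that $h(\boldsymbol u/\boldsymbol b^{(m)})\to h(\boldsymbol u/\boldsymbol b)$ pointwise and that the same uniform decay bound $C(1+|\boldsymbol u|)^{-N}$ applies, so dominated convergence concludes. One also needs care that $g_{\boldsymbol A^{(m)}}$ varies with $m$, which is why the chirp difference is peeled off first, so that the dilation and output chirp act on the fixed Schwartz function $g_{\boldsymbol A}$.

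\emph{Conclusion.} Summing the four pieces gives the stated limit. Continuity of $\boldsymbol A\mapsto\mathscr L_{\boldsymbol A}(f)$ on $\Omega$ then follows from sequential continuity, since $\Omega$ with the relative Euclidean topology is metrizable.
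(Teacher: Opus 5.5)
Your proposal is correct and follows essentially the same route as the paper's proof. Both use the chirp--Fourier--dilation--chirp factorization, dominated convergence for the two chirps, Plancherel for the input-chirp term, and a uniform Schwartz decay bound $C(1+|\boldsymbol u|)^{-N}$ for the dilation step. The differences are cosmetic: the paper normalizes the dilation to be unitary, so its remaining constant $\chi(\boldsymbol b)$ is unimodular and eventually constant, whereas you carry the factor $|b_1b_2|^{1/2}$ in the dilation norm and use continuity of the principal branch for $C_{A_k^{(m)}}$.
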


The proof of Theorem~\ref{thm:lct-parameter-stability} is presented in Appendix \ref{aa}.

\begin{figure*}
  \centering
  \includegraphics[width=0.9\linewidth]{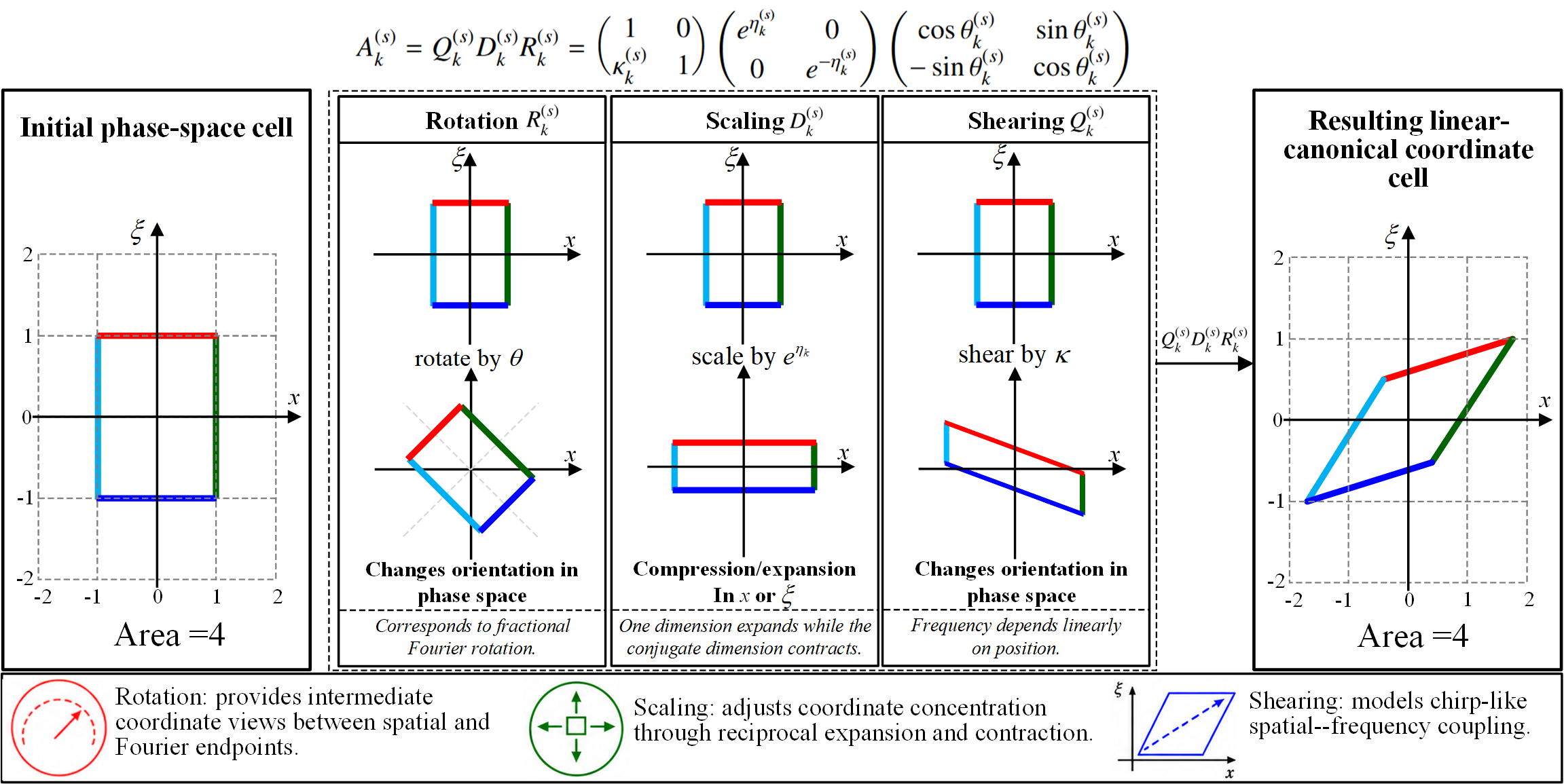}
  \caption{Geometric interpretation of the rotation--scaling--shearing factorization of an LCT canonical matrix.}
  \label{fig:lctvis}
\end{figure*}

\subsection{Multiplier Representation of the LCRT}\label{MF-B}

While the LCT provides a parameterized family of analytical coordinates for representation learning, it does not by itself isolate direction. We therefore use the LCRT, a Riesz-type directional operator in LCT coordinates (see, for example, \cite{Unser2009Monogenic,Felsberg2001MonogenicSignal,Fu2023RieszFrFT}).
For the subfamily satisfying $a_k=d_k$ and $b_k\neq0$, $k\in\{1,2\}$, Definition~\ref{def:LCRT} records the equivalent multiplier form in the LCT domain. Section~\ref{FR-A} uses a regularized version of this multiplier as the generalized discrete response for unrestricted learned canonical matrices.
\begin{definition}\label{def:LCRT}
  Let $j\in\{1,2\}$, and assume $a_k=d_k$ and $b_k\neq0$ for
  $k\in\{1,2\}$. The $j$-th {\rm LCRT}
  $R_{j}^{\boldsymbol A}$ has the following multiplier representation
    $
    -i\frac{\omega_j/b_j}
    {\left|\boldsymbol{\omega}/\boldsymbol b\right|}
  $ in the LCT domain.

  For any $f\in\mathscr{S}(\mathbb{R}^2)$ and
  $\boldsymbol{\omega}\in\mathbb{R}^2$, one has
  \begin{equation}\label{eq:LCRT_multiplier}
    \mathscr{L}_{\boldsymbol A}
    \!\left[R_{j}^{\boldsymbol A}(f)\right](\boldsymbol{\omega})
    =
    -i\frac{\omega_j/b_j}
    {\left|\boldsymbol{\omega}/\boldsymbol b\right|}
    \,\mathscr{L}_{\boldsymbol A}(f)(\boldsymbol{\omega}).
  \end{equation}
  Equivalently,
  \begin{equation}\label{eq:LCRT_impl}
    R_j^{\boldsymbol A}f(\boldsymbol x)
    =
    \mathscr{L}_{\boldsymbol A^{-1}}
    \!\left[
      -i\frac{\omega_j/b_j}
      {\left|\boldsymbol{\omega}/\boldsymbol b\right|}
      \,\mathscr{L}_{\boldsymbol A}(f)(\boldsymbol{\omega})
    \right](\boldsymbol x).
  \end{equation}
  where $\frac{\boldsymbol{\omega}}{\boldsymbol b}:=(\frac{\omega_1}{b_1},\frac{\omega_2}{b_2})$.
  The equality is understood in the distributional sense.
\end{definition}
Under the stated subfamily condition, the multiplier form in \eqref{eq:LCRT_multiplier}, followed by the inverse LCT in \eqref{eq:LCRT_impl}, implements the operator without numerical evaluation of the singular integral. In the discrete implementation used here, the zero-frequency component is set to zero and the denominator is regularized by a small positive constant.

For edge detection, \cite{Yang2026LCRTImageEdgeDetection} defines a sharpness measure associated with LCRT. For $j=1,2$, when $b_jc_j\neq0$, this quantity is defined as $R^{\rm E}_{\rm sc}(A_j)={b_j}/{c_j}.$

\subsection{Geometry of Canonical Matrices and Roles of LCT and LCRT}\label{MF-C}

Definition~\ref{2DLCT} makes the coordinate role explicit: the canonical matrix $A_k$ parameterizes the LCT kernel and thereby its coordinate system.

Fig.~\ref{fig:lctvis} shows how each canonical matrix
$A_k=
  \left(\begin{smallmatrix}
    a_k & b_k \\
    c_k & d_k
  \end{smallmatrix}\right)$
acts through a composition of elementary operations in phase space. In particular, $A_k$ can be reparameterized as
\begin{equation}
  \label{eq:lct_factorization}
  A_k
  =
  \begin{pmatrix}
    1        & 0 \\
    \kappa_k & 1
  \end{pmatrix}
  \begin{pmatrix}
    s_k & 0     \\
    0   & 1/s_k
  \end{pmatrix}
  \begin{pmatrix}
    \cos\theta_k  & \sin\theta_k \\
    -\sin\theta_k & \cos\theta_k
  \end{pmatrix},
\end{equation}
where
$s_k:=(a_k^2+b_k^2)^{1/2}$,
$\cos\theta_k:=a_k/s_k$,
$\sin\theta_k:=b_k/s_k$, and
$\kappa_k:=(a_kc_k+b_kd_k)/(a_k^2+b_k^2)$.
Equivalently, $\theta_k=\operatorname{atan2}(b_k,a_k)$, where $\operatorname{atan2}$ determines the quadrant of $\theta_k$.

The decomposition  \eqref{eq:lct_factorization} reparameterizes the existing LCT matrix without introducing independent transform parameters. Fig.~\ref{fig:lctvis} depicts a generic canonical block, in which we demonstrate the role of each matrix appearing in the decomposition  \eqref{eq:lct_factorization}. Read from right to left as a sequence of actions, the three matrices  correspond, respectively, to rotation, scaling, and shearing. The rotation matrix gives a reorientation analogous to the fractional Fourier transform, the scaling matrix adjusts the concentration of responses along conjugate coordinates, and the shearing matrix couples position and frequency through a chirp function. Since each matrix has unit determinant, the composition preserves area in phase space while changing the orientation and shape of the local cell.

\bigskip

The factorization characterizes the coordinate geometry, and directional selection is supplied separately by the LCRT multiplier operator. The LCT specifies the canonical coordinate system, whereas LCRT determines directional features with respect to those coordinates. The next section instantiates these complementary roles in the proposed framework for feature reparameterization.

\section{LLCT for Visual Feature Processing}\label{siv}

LLCT processes visual features through gated residual corrections computed from LCT-family responses. We refer to this operation as feature reparameterization. Specifically, in Subsection \ref{FR-A} we first introduce a common canonical parameterization and a
shape-preserving gated residual interface that incorporates
LCT-family responses while retaining the original feature pathway. In Subsection \ref{FR-B}, within this interface, the LCRT mode constructs directional refinements in learned linear-canonical coordinates, whereas the learnable LCT mode combines spatial, Fourier, and adaptive canonical responses to reorganize latent feature structure. We then specify the implementation, the parameter-sharing, and the initialization rules in Subsection \ref{FR-C}, and computational cost of the two modes in Subsection \ref{FR-D}.

\subsection{Canonical Parameterization and Residual Interface}\label{FR-A}

Let $s$ denote a selected insertion in a host backbone, and let 
$\mathbf{X}^{(s)}  \in  \mathbb{R}^{B\times C_s\times H_s\times W_s}$
denote the feature tensor at that site, where $B$ is the batch size, and $C_s$, $H_s$, and $W_s$
denote, respectively, the number of channels, height, and width at the site $s$. The mode and the insertion site are fixed by the experimental configuration rather than selected dynamically for each input. The LCT operation is specified for each selected site, and its parameters are learned jointly with the host network. When the host architecture reuses a block across recurrent steps, the inserted module follows the same rule for sharing parameters, and, otherwise, different insertion sites use separate parameters.

At each selected site $s$, either of LCT and LCRT modes learns
two $2\times2$ canonical matrices, one for each spatial axis, denoted by $\boldsymbol A^{(s)}=(A_1^{(s)},A_2^{(s)})$. 
Following \eqref{eq:lct_factorization}, for $k\in\{1,2\}$ each matrix $A_k^{(s)}$ is parameterized by 
\begin{equation}
	\label{eq:stage_canonical_parameterization}
	A_k^{(s)}  =Q_k^{(s)}D_k^{(s)}R_k^{(s)},
\end{equation}
where $Q_k^{(s)}=\left(\begin{smallmatrix}1&0\\ \kappa_k^{(s)}&1\end{smallmatrix}\right),$
    $D_k^{(s)}=\left(\begin{smallmatrix}e^{\eta_k^{(s)}}&0\\0&e^{-\eta_k^{(s)}}\end{smallmatrix}\right), $ and
    $R_k^{(s)}=\left(\begin{smallmatrix}
      \cos\theta_k^{(s)}  & \sin\theta_k^{(s)} \\
      -\sin\theta_k^{(s)} & \cos\theta_k^{(s)}
    \end{smallmatrix}\right).$
This parameterization enforces $\det A_k^{(s)}=1$. These matrices are parameterized by trainable \(\kappa_k^{(s)}\), \(\eta_k^{(s)}\), and \(\theta_k^{(s)}\), and the effect of each parameter on the module output depends on
the response obtained by the selected mode. At a given insertion site $s$, the same matrix pair  $\boldsymbol A^{(s)}$ is applied to every sample and channel, while $A_1^{(s)}$ and $A_2^{(s)}$ remain specific to the two tensor axes. For numerical evaluation of the discrete kernel and multiplier, in each occurrence of $b_k^{(s)}$ in a denominator
we replace it by $  \widetilde b_k^{(s)}:
  =
  \operatorname{sgn}(b_k^{(s)})
  \max(|b_k^{(s)}|,b_{\min}),$
where $b_{\min}=10^{-3}$ and
$\operatorname{sgn}(0)=1$, to avoid the denominator from becoming zero.

We use the notation $\mathscr{L}_{\boldsymbol A}$ from Section~\ref{MF-A} for a separable finite-grid realization that supports automatic differentiation. Its discrete operator performs chirp multiplication and orthonormal FFTs along the width and height axes. It operates independently on each channel of each sample, and returns a complex tensor with the same dimensions as its input. The hard floor makes the realization piecewise differentiable with respect to the canonical parameters. The learnable LCT mode retains only forward transform responses, whereas the LCRT mode can  also evaluate the inverse
LCTs to construct its directional response. A compatible finite-grid realization and sensitivity bound are given in Proposition~\ref{prop:finite-grid-realization} of Appendix~\ref{bb}. The stability analysis concerns the continuous LCT, while the discrete realization is piecewise differentiable with respect to its canonical parameters.

At an insertion site, the selected module mode
$\mathcal{M}_{\Theta^{(s)}}^{(s)}$
produces a residual correction
\begin{equation}
  \Delta\mathbf{X}^{(s)}
  =
  \mathcal{M}_{\Theta^{(s)}}^{(s)}
  \left(\mathbf{X}^{(s)}\right),
  \label{eq:delta_X_s}
\end{equation}
The correction in \eqref{eq:delta_X_s} gives the output
\begin{equation}
  \mathbf{Y}^{(s)}
  =
  \mathbf{X}^{(s)}
  +
  \gamma^{(s)}\odot\Delta\mathbf{X}^{(s)}.
  \label{eq:outputY_s}
\end{equation}
Here,
$\gamma^{(s)}\in\mathbb{R}^{C_s}$
is a learnable gate defined for each channel, initialized to zero, and broadcast over the batch and spatial dimensions. The zero initialization preserves the host feature path at the start of training. The parameter set $\Theta^{(s)}$ contains the canonical parameters, response projections, fusion operators, residual gates, and, for the learnable LCT mode, the parameters used for endpoint conditioning.

\subsection{Directional Reparameterization via LCRT}\label{FR-B}

The LCRT mode is deployed at insertion sites selected for directional refinement. Given $\mathbf X^{(s)}$, the learned response in linear canonical coordinates is $  \mathbf Z^{(s)}
  =
  \mathscr{L}_{\boldsymbol A^{(s)}}
  \left(\mathbf X^{(s)}\right)
  \in
  \mathbb C^{B\times C_s\times H_s\times W_s}.$

For general learned canonical matrices, we use the following multiplier as an operational definition of the generalized discrete Riesz response in the LCT domain:
\begin{align}\label{d1}
  \bm{\mathcal R}^{(s)}_j
  =
  \mathscr{L}_{(\boldsymbol A^{(s)})^{-1}}
  \!\left[
    -i\,
    \frac{\omega_j/\widetilde b_j^{(s)}}
    {\left|
      \boldsymbol\omega/\widetilde{\boldsymbol b}^{(s)}
      \right|+\varepsilon_{\mathrm R}}
    \mathbf Z^{(s)}
    \right],
  \qquad j\in\{1,2\}.
\end{align}
The zero-frequency multiplier is defined to be zero. For the subfamily satisfying $(a_k=d_k)$, \eqref{d1} is a regularized discrete realization of the established LCRT multiplier. When the numerical floor is inactive and $(\varepsilon_{\mathrm R}\to0^+)$, \eqref{d1} reduces to \eqref{eq:LCRT_multiplier} away from the origin. Moreover, the two components  $\{\bm{\mathcal R}^{(s)}_1, \bm{\mathcal R}^{(s)}_2\}$ in \eqref{d1} describe variation, respectively, along the two separable tensor directions under the learned canonical parameterization. 

We measure the directional magnitudes relative to the input feature using the \textit{phase-like descriptor}
\begin{equation}
  \mathbf{\Phi}^{(s)}:
  =
  \tan^{-1}\!\left[
    \frac{
      \sqrt{
        \left|\bm{\mathcal{R}}^{(s)}_1\right|^2
        +
        \left|\bm{\mathcal{R}}^{(s)}_2\right|^2
        +\varepsilon_{\Phi}
      }}
    {\mathbf{X}^{(s)}+\varepsilon_{\Phi}}
    \right],
  \label{eq:phi_s}
\end{equation}
where all operations in \eqref{eq:phi_s} are elementwise and $\tan^{-1}$ denotes the ordinary one-argument arctangent. This descriptor in \eqref{eq:phi_s} is used as a numerical feature and is not assigned the physical interpretation of phase in a latent feature field. We then construct $$  \mathbf S^{(s)}:
  =
  \operatorname{Concat}\left[
    \left|\bm{\mathcal R}^{(s)}_1\right|,
    \left|\bm{\mathcal R}^{(s)}_2\right|,
    \mathbf\Phi^{(s)}
    \right]
  \in
  \mathbb R^{B\times3C_s\times H_s\times W_s}.$$
A biased $1\times1$ projection $\psi_{\mathrm R}^{(s)}:
\mathbb R^{3C_s}\rightarrow\mathbb R^{C_s}$ 
is applied at each spatial location to fuse the descriptor channels. 

If there is no activation or normalization, then  \eqref{eq:delta_X_s} becomes the variant 
$
  \Delta\mathbf X^{(s)}
  =
  \psi_{\mathrm R}^{(s)}
  \left(\mathbf S^{(s)}\right).
$
Equation~\eqref{eq:outputY_s} then inserts this correction into the host feature path.

\subsection{Learnable LCT Reparameterization with Endpoint Conditioning}\label{FR-C}

The learnable LCT mode is deployed at selected latent insertion sites and does not use a directional multiplier. The learned matrix pair $\boldsymbol A^{(s)}$ is denoted by $\boldsymbol A_{\mathrm L}^{(s)}$ to distinguish it from the fixed Fourier pair. We now propose a module which consists of  the spatial endpoint response, the fixed Fourier response, and the response in learned linear canonical coordinates as follows: $ \mathbf U_{\mathrm S}^{(s)}:=
\phi_{\mathrm S}^{(s)}
(\mathbf X^{(s)}), 
$
$$
    \mathbf U_{\mathrm L}^{(s)}:
      =
    \rho_{\mathrm L}^{(s)}
    [
      \mathscr L_{\boldsymbol A_{\mathrm L}^{(s)}}
      (\mathbf X^{(s)})
    ],
                 \        {\rm and} \ 
    \mathbf U_{\mathrm F}^{(s)}:
      =
    \rho_{\mathrm F}^{(s)}
    [
      \mathscr L_{\boldsymbol A_{\mathrm F}}
      (\mathbf X^{(s)})
      ].$$
Here,
$\phi_{\mathrm S}^{(s)}$
denotes a depthwise $3\times3$ convolution with unit stride, dimension-preserving padding, and a bias term. Moreover,  $\mathbf U_{\mathrm S}^{(s)}$ is not followed by an additional activation or normalization, and $\boldsymbol A_{\mathrm L}^{(s)}$
uses the parameterization in
\eqref{eq:stage_canonical_parameterization}. The fixed Fourier endpoint is $\boldsymbol A_{\mathrm F}:
=
(A_{\mathrm F},A_{\mathrm F}),$
where 
 $A_{\mathrm F}:
  =
  \begin{pmatrix}
    0  & 1 \\
    -1 & 0
  \end{pmatrix},$
which is evaluated using an orthonormal two-dimensional FFT.

The Fourier and learnable LCT branches both use forward transform
coefficients, but apply different real-valued readouts. For
$\mathbf Z\in\mathbb C^{B\times C_s\times H_s\times W_s}$, define
\begin{equation}\label{eq:transform_readouts}
\begin{aligned}
\rho_{\mathrm F}^{(s)}(\mathbf Z)
=P_{\mathrm F}^{(s)}
  \!\left(\log(1+|\mathbf Z|)\right),\\
\rho_{\mathrm L}^{(s)}(\mathbf Z)=P_{\mathrm L}^{(s)}
  \left(
  [\operatorname{Re}\mathbf Z,
   \operatorname{Im}\mathbf Z]_{\mathrm{ch}}
  \right).
\end{aligned}
\end{equation}
Here, $[\cdot,\cdot]_{\mathrm{ch}}$ denotes concatenation along the
channel dimension. $P_{\mathrm F}^{(s)}$ is a learned biased
$1\times1$ projection from $C_s$ to $C_s$ channels, whereas
$P_{\mathrm L}^{(s)}$ is a learned biased $1\times1$ projection from
$2C_s$ to $C_s$ channels. No activation or normalization follows
either projection. Thus, $\mathbf U_{\mathrm S}^{(s)}$,
$\mathbf U_{\mathrm F}^{(s)}$, and $\mathbf U_{\mathrm L}^{(s)}$ are
real tensors in $\mathbb R^{B\times C_s\times H_s\times W_s}$.

These transform branches produce coefficient-domain descriptors,
without reconstructing spatial signals by an inverse transform.
The coefficient maps are treated as feature descriptors indexed on
a common tensor grid. The fixed Fourier branch uses coefficient
magnitudes as a prescribed spectral reference, whereas the learned
LCT branch projects the real and imaginary coefficient maps jointly
and can therefore respond to their phase. Endpoint conditioning
modulates the learned LCT response without changing the insertion
site or the module mode. For $\mathbf U_{\mathrm F}^{(s)}$ and
$\mathbf U_{\mathrm L}^{(s)}$, the readouts refer to transform
coefficients rather than physical spatial locations.

The responses in spatial and Fourier coordinates provide two endpoint references for conditioning the learned LCT branch. Before their energies are measured, we apply the same channel normalization without affine parameters for $\mathbf U_{\mathrm S}^{(s)}$ and
$\mathbf U_{\mathrm F}^{(s)}$. For a response $\mathbf U$, let $ \mu_{b,h,w}(\mathbf U):
  =
  \frac{1}{C_s}
  \sum_{c=1}^{C_s}
  U_{b,c,h,w},$
and define
$
  \nu_{b,h,w}(\mathbf U):
  =
  \frac{1}{C_s}
  \sum_{c'=1}^{C_s}
  \left[
  U_{b,c',h,w}-\mu_{b,h,w}(\mathbf U)
  \right]^2.
$
The normalized response is
$$
  \operatorname{Norm}(\mathbf U)_{b,c,h,w}:
  =
  \frac{
    U_{b,c,h,w}-\mu_{b,h,w}(\mathbf U)
  }{
    \sqrt{\nu_{b,h,w}(\mathbf U)+\varepsilon_{\mathrm N}}
  }.
$$
The normalized response energy in each channel is $ \mathbf e_m^{(s)}:
  =
  \operatorname{GAP}
  (
  |
  \operatorname{Norm}
  (\mathbf U_m^{(s)})
  |^2
  ),
  m\in\{\mathrm S,\mathrm F\},$
where GAP averages over the spatial dimensions, so
$\mathbf e_m^{(s)}\in\mathbb R^{B\times C_s}$.
We form the regularized relative energies $  \mathbf p_m^{(s)}:
  =
  \frac{
    \mathbf e_m^{(s)}
  }{
    \mathbf e_{\mathrm S}^{(s)}
    +
    \mathbf e_{\mathrm F}^{(s)}
    +
    \varepsilon_{\mathrm E}
  },$
and the statistic measuring the balance between the endpoint responses
$$
  \mathbf h_{\mathrm{SF}}^{(s)}:
  =
  -
  \frac{
    \sum\limits_{m\in\{\mathrm S,\mathrm F\}}
    \mathbf p_m^{(s)}
    \log
    (\mathbf p_m^{(s)}+\varepsilon_{\log})
  }{
    \log 2
  }.
$$
Because of the denominator stabilizer, the two relative energies do not sum exactly to one, although the deviation becomes negligible when their total energy is large relative to \(\varepsilon_{\mathrm E}\). Accordingly,
\(\mathbf h_{\mathrm{SF}}^{(s)}\)
is used for conditioning without assigning it an exact probability interpretation or a prescribed range. When the endpoint energy is well above the regularization scale, the statistic is small when one endpoint dominates, and larger when the two relative energies are comparable.

The learned-LCT branch weight is
$  \lambda_{\mathrm L}^{(s)}:
  =
  \sigma
  (
  \boldsymbol\alpha^{(s)}
  \odot
  \mathbf h_{\mathrm{SF}}^{(s)}
  +
  \boldsymbol\beta^{(s)}
  ),$
where $\sigma$ denotes the sigmoid function and $\boldsymbol\alpha^{(s)},\boldsymbol\beta^{(s)} \in\mathbb R^{C_s}$ are parameters learned separately for each channel. The resulting $\lambda_{\mathrm L}^{(s)} \in\mathbb R^{B\times C_s}$ is a broadcast over the spatial dimensions. Consequently, a positive, negative, or zero value of each channel of $\boldsymbol\alpha^{(s)}$ makes the channelwise mapping from $h_{\mathrm{SF},n,c}^{(s)}$ to
$\lambda_{\mathrm L,n,c}^{(s)}$ increasing, decreasing, or constant, respectively.

The residual correction is
\begin{align*}
  \Delta\mathbf X^{(s)}:
  =
  \psi_{\mathrm L}^{(s)}
  \left(
  \mathbf U_{\mathrm S}^{(s)}
  +
  \mathbf U_{\mathrm F}^{(s)}
  +
  \lambda_{\mathrm L}^{(s)}
  \odot
  \mathbf U_{\mathrm L}^{(s)}
  \right),
\end{align*}
where
\(\psi_{\mathrm L}^{(s)}\)
denotes a learned \(1\times1\) channel projection with bias and no subsequent activation or normalization. This correction is inserted through \eqref{eq:outputY_s}. The input-dependent coefficient \(\lambda_{\mathrm L}^{(s)}\) modulates only the learned LCT response, and it neither selects the insertion site nor switches between the two modes.

\subsection{Implementation and Computational Cost}\label{FR-D}

The discrete transforms are applied independently to each feature channel. Canonical matrices are shared across channels at one insertion site, and the response projections, fusion operators, and residual gates are learned for that site. Modules attached to distinct nonrecurrent sites do not share parameters. For a recurrent host block, the parameters of the LCT module are reused whenever the host block itself is reused.

 For the learnable LCT mode and the LCRT mode, we make the following initialization assumptions. 
 Let \(\kappa_k^{(s)}=\eta_k^{(s)}=0\) and
\(\theta_k^{(s)}=\pi/2\). Let \(\boldsymbol\alpha^{(s)}=\boldsymbol\beta^{(s)}=\mathbf0\)
and, for \(\gamma^{(s)}\), we retain the zero initialization. For convolution weights, we use Kaiming uniform initialization, and their biases are initialized to zero. We use
\(\varepsilon_{\mathrm R}=\varepsilon_{\Phi}
=\varepsilon_{\mathrm N}=\varepsilon_{\mathrm E}
=\varepsilon_{\log}=10^{-6}\)
at every insertion site.

A separable two-dimensional LCT is implemented by chirp multiplication and FFTs along both tensor axes. For $\mathbf X^{(s)}$, one transform requires $ \mathcal O(
  B C_s H_s W_s
  [\log H_s+\log W_s]
  ) $
operations. For approximately square feature maps with
$H_s\approx W_s\approx N_s$, the transform cost simplifies into
$
  \mathcal O(B C_sN_s^2\log N_s).
$
Each LCRT module evaluates one forward LCT and two inverse LCTs. The transform branches of the learnable LCT module evaluate one fixed Fourier transform and one learned forward LCT, without any inverse transform.

The depthwise spatial mapping and the endpoint statistics require $\mathcal O(B C_s H_s W_s)$ operations. The LCRT fusion projection maps $3C_s$ channels to $C_s$ channels. In the learnable LCT mode, the Fourier response projection maps $C_s$ to $C_s$ channels, the real--imaginary LCT response projection maps $2C_s$ to $C_s$ channels, and the fusion projection maps $C_s$ to $C_s$ channels. These projections require $\mathcal O(B C_s^2 H_s W_s)$ operations, with constants determined by their input and output channel dimensions. Practical overhead therefore depends on the feature resolution, channel width, and number of independent inserted modules.
Section~\ref{sv} reports parameters, FLOPs, and latency for the complete networks used in the experiments.

\begin{table*}[t] 
  \centering 
  \footnotesize 
  \caption{LCRT insertion locations for crack segmentation with 
    $448\times448$ inputs. Feature dimensions are 
    $C_s\times H_s\times W_s$.} 
  \label{tab:crack_module_placement} 
 
  \begin{tabularx}{\textwidth}{ 
    @{} 
    >{\raggedright\arraybackslash}p{0.14\textwidth} 
    >{\raggedright\arraybackslash}X 
    >{\raggedright\arraybackslash}p{0.28\textwidth} 
    @{} 
    } 
    \toprule 
    \textbf{Backbone}                                 & 
    \textbf{Insertion location}                       & 
    \textbf{Module count and feature dimensions}        \\ 
    \midrule 
 
    U-Net~\cite{Ronneberger2015UNet}                  & 
    Third encoder skip feature $e_3$, before concatenation 
    with the corresponding decoder feature            & 
    1 module; $256\times112\times112$                 \\ 
 
    U-Net v2~\cite{Peng2023UNetV2}                    & 
    Projected shallow feature $f_1$ after 
    \texttt{Translayer\_1} and before SDI aggregation & 
    1 module; $32\times112\times112$                  \\ 
 
    Attention U-Net~\cite{Schlemper2019AttentionGatedNetworks}     & 
    Third encoder skip feature $e_3$, before its attention 
    gate and subsequent decoder concatenation         & 
    1 module; $256\times112\times112$                 \\ 
 
    DeepLabV3+~\cite{Chen2018DeepLabV3Plus}           & 
    Projected low-level feature after the $1\times1$ 
    projection, normalization, and activation, before 
    concatenation with the upsampled ASPP feature     & 
    1 module; $48\times112\times112$                  \\ 
 
    \bottomrule 
  \end{tabularx}

  \vspace{0.5mm}

  \parbox{\textwidth}{\scriptsize
    U-Net and Attention U-Net use encoder widths
    $64,128,256,512,1024$, with $e_3$ denoting the feature after two downsampling operations. U-Net v2 uses a PVTv2-B2 encoder and a 32-channel projected feature before SDI fusion. DeepLabV3+ uses an encoder output stride of 16 and a stride-4 low-level branch projected to 48 channels. LCRT preserves the feature dimensions at each insertion site.
  }
\end{table*}

\section{Experimental Evaluation}\label{sv}

This section evaluates the task performance and computational cost of the two modes of LLCT. We first present the experimental setup in Subsection \ref{EE-A}, and then evaluate LCRT on crack segmentation in Subsection \ref{EE-B} and learnable LCT on single-image deraining through backbone comparisons and component
ablations in Subsection \ref{EE-C}. We further report parameter counts, computational cost, and inference latency under matched settings in Subsection \ref{EE-D}.

\subsection{Experimental Setup}\label{EE-A}

\textbf{Tasks, Baselines, and Metrics.}
The main evaluation considers crack segmentation and single-image deraining. LCRT is assessed on crack segmentation, and the learnable LCT is assessed on deraining. For each experiment reported in the main text, the mode and insertion site are specified in advance and remain fixed during training and inference, as described in Section~\ref{siv}.

Crack segmentation is evaluated on the Crack Segmentation Dataset~\cite{KaggleCrackSegmentationDataset} using U-Net~\cite{Ronneberger2015UNet}, U-Net v2~\cite{Peng2023UNetV2}, Attention U-Net~\cite{Schlemper2019AttentionGatedNetworks}, and DeepLabV3+~\cite{Chen2018DeepLabV3Plus}. The deraining evaluation uses Rain200L, Rain200H, DID, and SPA with DDN~\cite{Fu2017DDN}, PReNet~\cite{Ren2019PReNet}, Restormer~\cite{Zamir2022Restormer}, and DRSformer~\cite{Chen2023DRSformer}. We report precision, recall, F1 score, and IoU for the crack class, and PSNR and SSIM for the deraining task.

\textbf{Comparison Protocol and Module Placement.}
All models are implemented in PyTorch and evaluated on a workstation equipped with two NVIDIA RTX A6000 GPUs. For each architecture, the
unmodified host network serves as the baseline. The baseline and its augmented counterpart use the same task-specific training and evaluation protocol. The inserted module is optimized jointly with the host network, while the prediction head, training objective, and input and output formats remain unchanged.

We insert one LCRT module into a feature branch at one-quarter of the input resolution in each backbone. Table~\ref{tab:crack_module_placement} specifies the LCRT insertion
locations used in the crack-segmentation experiments. For deraining, Table~\ref{tab:learnable_lct_deraining_results} reports the insertion site and scope for each backbone. The canonical initialization, numerical constants, and rules for sharing parameters follow Section~\ref{siv}, and in particular, modules at distinct nonrecurrent sites use separate parameters, and an insertion in a recurrent block follows the sharing pattern of that block.

\begin{figure}[t]
  \centering
  \includegraphics[width=\linewidth]{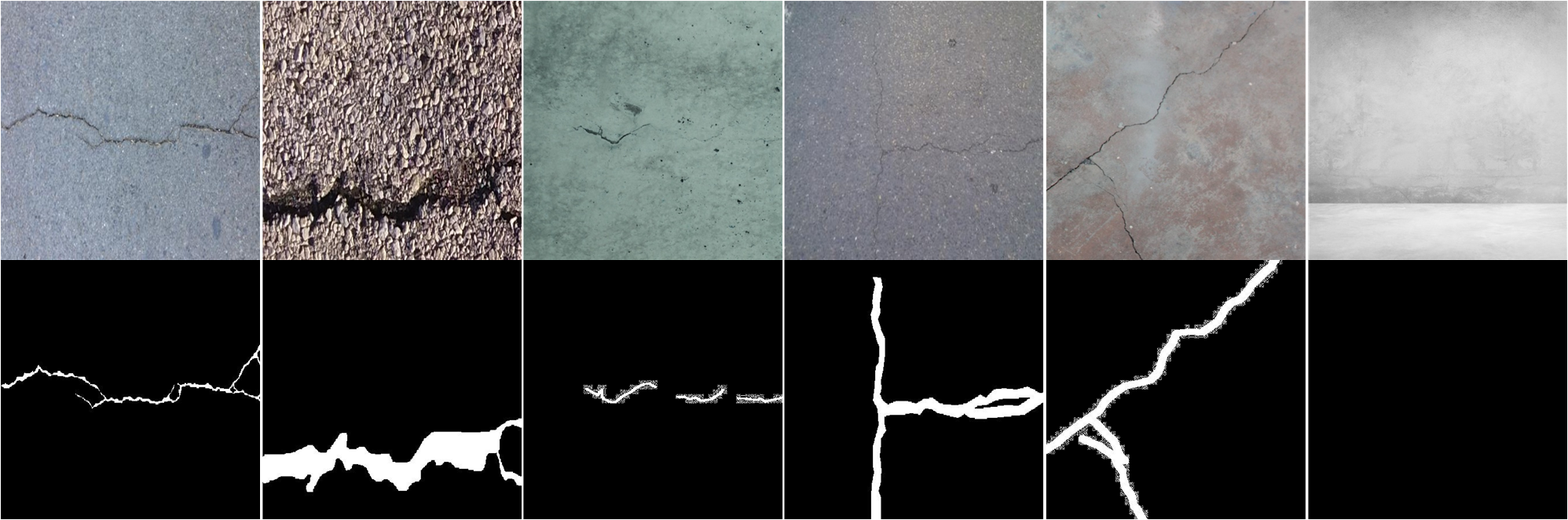}
  \caption{Example image--mask pairs from the Crack Segmentation Dataset. Top row: RGB images. Bottom row: ground-truth masks, with crack pixels shown in white and background pixels in black.}
  \label{fig:crackdata}
\end{figure}

\begin{table}[t]
  \renewcommand{\arraystretch}{1}
  \caption{LCRT component ablation results on the Crack Segmentation Dataset. All metrics are reported for the crack class.}
  \label{tab:lcrt_crack_ablation}
  \centering
  \scriptsize
  \begin{tabular*}{\columnwidth}{@{\extracolsep{\fill}}lcccc@{}}
    \toprule
    \textbf{Method}
      & \textbf{Prec. $\uparrow$}
      & \textbf{Rec. $\uparrow$}
      & \textbf{F1 $\uparrow$}
      & \textbf{IoU $\uparrow$} \\
    \midrule
    \multicolumn{5}{@{}l}{\textit{U-Net}} \\
    Baseline     & 0.7239          & 0.7199          & 0.7219          & 0.5648          \\
    +Riesz-only  & 0.7278          & 0.7350          & 0.7314          & 0.5765          \\
    +forward-LCT-only    & 0.7358          & 0.7285          & 0.7321          & 0.5775          \\
    +LCRT (Ours) & \textbf{0.7370} & \textbf{0.7402} & \textbf{0.7386} & \textbf{0.5855} \\
    \midrule
    \multicolumn{5}{@{}l}{\textit{U-Net v2}} \\
    Baseline     & 0.7044          & 0.7424          & 0.7229          & 0.5660          \\
    +Riesz-only  & 0.7165          & \textbf{0.7501} & 0.7329          & 0.5784          \\
    +forward-LCT-only    & 0.7248          & 0.7430          & 0.7338          & 0.5795          \\
    +LCRT (Ours) & \textbf{0.7289} & 0.7492          & \textbf{0.7389} & \textbf{0.5859} \\
    \midrule
    \multicolumn{5}{@{}l}{\textit{Att. U-Net}} \\
    Baseline     & 0.7101          & 0.7330          & 0.7213          & 0.5640          \\
    +Riesz-only  & 0.7188          & 0.7425          & 0.7305          & 0.5754          \\
    +forward-LCT-only    & 0.7241          & 0.7368          & 0.7304          & 0.5753          \\
    +LCRT (Ours) & \textbf{0.7279} & \textbf{0.7464} & \textbf{0.7370} & \textbf{0.5836} \\
    \midrule
    \multicolumn{5}{@{}l}{\textit{DeepLabV3+}} \\
    Baseline     & 0.7262          & 0.7101          & 0.7181          & 0.5602          \\
    +Riesz-only  & 0.7255          & 0.7280          & 0.7267          & 0.5708          \\
    +LCT-only    & 0.7353          & 0.7195          & 0.7273          & 0.5715          \\
    +LCRT (Ours) & \textbf{0.7361} & \textbf{0.7334} & \textbf{0.7347} & \textbf{0.5807} \\
    \bottomrule
  \end{tabular*}
\end{table}

\begin{figure*}[t]
  \centering
  \includegraphics[width=0.98\textwidth]{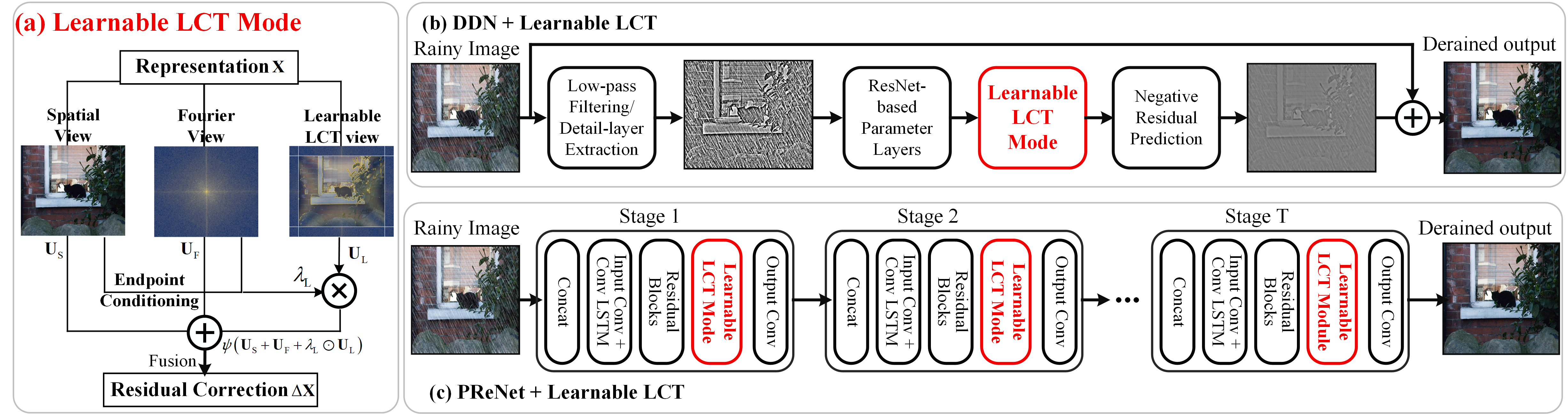}
  \caption{Learnable LCT module and its insertion into image deraining backbones. (a) The module combines views in spatial, Fourier, and learned canonical coordinates, and the endpoint activity determines the weight of the learned view. (b) In DDN, the module is inserted after the ResNet-based parameter layers and before negative residual prediction. (c) In PReNet, it is inserted within each recurrent stage between the residual blocks and the output convolution.}
  \label{fig:learnable_lct_derain_insertion}
\end{figure*}

\begin{table*}[h]
  \centering
  \footnotesize
  \renewcommand{\arraystretch}{1} \setlength{\tabcolsep}{4pt}
  \caption{Training configurations for the image deraining experiments.}
  \label{tab:deraining_training}
  \begin{tabularx}{\textwidth}{
    @{}
    >{\raggedright\arraybackslash}p{0.105\textwidth}
    >{\raggedright\arraybackslash}p{0.12\textwidth}
    >{\raggedright\arraybackslash}p{0.16\textwidth}
    >{\raggedright\arraybackslash}X
    >{\centering\arraybackslash}p{0.10\textwidth}
    >{\centering\arraybackslash}p{0.095\textwidth}
    >{\centering\arraybackslash}p{0.07\textwidth}
    @{}
  }
    \toprule
    \textbf{Backbone} &
    \textbf{Loss} &
    \textbf{Optimizer / initial learning rate} &
    \textbf{Learning-rate schedule} &
    \textbf{Training iterations} &
    \textbf{Patch size} &
    \textbf{Global batch size} \\
    \midrule
    DDN &
    Squared reconstruction error &
    Adam / $10^{-3}$ &
    Multiplied by 0.1 at 100K and 150K iterations &
    200K & $64\times64$ & 20 \\

    PReNet &
    $-\mathrm{SSIM}$ &
    Adam / $10^{-3}$ &
    Multiplied by 0.2 at 60K, 100K, and 160K iterations &
    200K & $100\times100$ & 18 \\

    Restormer &
    $L_1$ &
    AdamW / $3\times10^{-4}$ &
    Constant for 92K iterations, then cosine decay to
    $10^{-6}$ over 208K iterations &
    300K & $128\times128$ & 8 \\

    DRSformer &
    $L_1$ &
    AdamW / $10^{-4}$ &
    Constant for 92K iterations, then cosine decay to
    $10^{-6}$ over 208K iterations &
    300K & $128\times128$ & 8 \\
    \bottomrule
  \end{tabularx}

  \vspace{0.5mm}
  \parbox{\textwidth}{\scriptsize
    K denotes \(10^3\) optimizer updates. Global batch size denotes the
    number of training samples contributing to one optimizer update.
    Patch sizes remain fixed throughout training.
  }
\end{table*}

\subsection{Directional Reparameterization via LCRT}\label{EE-B}

\textbf{Crack Segmentation.}
We evaluate the LCRT on the Kaggle Crack Segmentation
Dataset~\cite{KaggleCrackSegmentationDataset}, which contains
11,298 image--mask pairs resized to $448\times448$.
The dataset aggregates samples from 12 public
crack segmentation datasets and includes both crack-containing
and crack-free images. We retain the released stratified test
partition of 1,695 images and split the remaining 9,603 training
images into training and validation subsets using a fixed random
seed. The validation subset comprises approximately 10\% of the
released training partition, with stratification according to the
source-dataset identifiers encoded in the image filenames.

All models are trained for 50 epochs with a batch size of 4 using Adam and an initial learning rate of $10^{-3}$. Binary cross-entropy with logits is used as the training objective. Input images are converted to RGB, resized to $448\times448$, and normalized channelwise using a mean and standard deviation of 0.5. The corresponding masks are resized using nearest-neighbor interpolation and binarized at 0.5.

For each configuration, we retain the checkpoint with the highest validation-set Dice score at a threshold of 0.5. For that checkpoint, a single decision threshold is selected by maximizing validation Dice over $\{0.30,0.35,\ldots,0.70\}$ and then fixed for test evaluation. Crack class precision, recall, F1 score, and IoU are computed from foreground-pixel counts aggregated over the complete test set.
Table~\ref{tab:lcrt_crack_ablation} compares the baseline, Riesz-only, LCT-only, and full LCRT configurations. Fig.~\ref{fig:crackdata} shows six representative image--mask pairs from the dataset.

The baseline denotes the original backbone without the inserted module. All inserted-module variants use the same insertion location, feature dimensions, projection layers, residual gate, and training protocol. The Riesz-only variant applies the regularized Riesz multiplier in fixed Fourier coordinates without learning the canonical coordinates. The LCT-only variant applies the learned forward LCT to the input features and converts its coefficients into a real-valued response. It uses neither the Riesz multiplier nor an inverse LCT. Full LCRT combines the learnable canonical coordinates with the regularized Riesz response as described in Section~\ref{siv}.

\textbf{Quantitative Results.}
For the crack class, we have the following quantitative results.  The full LCRT point estimates are {0.0157--0.0167} higher in F1 and {0.0196--0.0207} higher in IoU than the corresponding baseline values. Compared with the better value from either of individual components for each backbone, the full configuration is higher by 0.0051--0.0074 in F1 and 0.0064--0.0092 in IoU.
For each backbone, the Riesz-only has higher recall than the LCT-only, while the LCT-only has higher precision. Among the reported point estimates, the full LCRT has the highest F1 and IoU in every backbone group and the highest precision in all four groups. It has the highest recall in three groups, and for U-Net v2, the Riesz-only recall is 0.0009 higher.

\begin{table*}[t]
  \renewcommand{\arraystretch}{1}
  \caption{Learnable LCT insertion settings and paired image deraining results for baseline models
    and the corresponding models augmented with learnable LCT on Rain200L, Rain200H, DID, and SPA.}
  \label{tab:learnable_lct_deraining_results}
  \centering
  \begin{threeparttable}
    \scriptsize
    \setlength{\tabcolsep}{2.6pt}
    \resizebox{\textwidth}{!}{%
      \begin{tabular}{lcc*{8}{c}}
        \toprule
        \multirow{2}{*}{Method}
                       & \multirow[c]{2}{*}{\shortstack[c]{Insertion Site}}
                       & \multirow[c]{2}{*}{\shortstack[c]{Insertion Scope}}
                       & \multicolumn{2}{c}{Rain200L}
                       & \multicolumn{2}{c}{Rain200H}
                       & \multicolumn{2}{c}{DID}
                       & \multicolumn{2}{c}{SPA}                                                                                                                                                                                                                                        \\
        \cmidrule(lr){4-5}\cmidrule(lr){6-7}
        \cmidrule(lr){8-9}\cmidrule(lr){10-11}
                       &                                                          &                 & PSNR (dB) \(\uparrow\) & SSIM \(\uparrow\) & PSNR (dB) \(\uparrow\) & SSIM \(\uparrow\) & PSNR (dB) \(\uparrow\) & SSIM \(\uparrow\) & PSNR (dB) \(\uparrow\) & SSIM \(\uparrow\) \\
        \midrule

        DDN \textit{(Baseline)}
                       & \multirow[c]{2}{*}{\shortstack[c]{After parameter layers                                                                                                                                                                                                       \\based on ResNet}}
                       & \multirow[c]{2}{*}{\shortstack[c]{Single                                                                                                                                                                                                                       \\insertion}}
                       & 34.39                                                    & 0.9626          & 26.30                  & 0.8067
                       & 31.28                                                    & 0.9184          & 36.83                  & 0.9446                                                                                                                                                   \\
        \cmidrule(lr){1-1}\cmidrule(lr){4-11}
        \hspace{0.8em}$+$ learnable LCT
                       &                                                          &                 &
        \textbf{34.92} & \textbf{0.9628}
                       & \textbf{26.83}                                           & \textbf{0.8232}
                       & \textbf{32.10}                                           & \textbf{0.9202}
                       & \textbf{36.91}                                           & \textbf{0.9489}                                                                                                                                                                                     \\

        \midrule

        PReNet \textit{(Baseline)}
                       & \multirow[c]{2}{*}{\shortstack[c]{After residual                                                                                                                                                                                                               \\blocks}}
                       & \multirow[c]{2}{*}{\shortstack[c]{Each recurrent                                                                                                                                                                                                               \\stage}}
                       & 37.21                                                    & 0.9779          & 28.94                  & 0.8958
                       & 33.31                                                    & \textbf{0.9418} & 39.69                  & 0.9840                                                                                                                                                   \\
        \cmidrule(lr){1-1}\cmidrule(lr){4-11}
        \hspace{0.8em}$+$ learnable LCT
                       &                                                          &                 &
        \textbf{38.65} & \textbf{0.9873}
                       & \textbf{30.06}                                           & \textbf{0.9155}
                       & \textbf{33.66}                                           & 0.9402
                       & \textbf{40.86}                                           & \textbf{0.9879}                                                                                                                                                                                     \\

        \midrule

        Restormer \textit{(Baseline)}
                       & \multirow[c]{2}{*}{\shortstack[c]{After MDTA and                                                                                                                                                                                                               \\before GDFN}}
                       & \multirow[c]{2}{*}{\shortstack[c]{Bottleneck/latent                                                                                                                                                                                                            \\Transformer blocks}}
                       & 40.95                                                    & 0.9889          & 31.96                  & 0.9324
                       & 35.32                                                    & 0.9643          & 47.98                  & 0.9921                                                                                                                                                   \\
        \cmidrule(lr){1-1}\cmidrule(lr){4-11}
        \hspace{0.8em}$+$ learnable LCT
                       &                                                          &                 &
        \textbf{41.05} & \textbf{0.9893}
                       & \textbf{32.00}                                           & \textbf{0.9330}
                       & \textbf{35.79}                                           & \textbf{0.9651}
                       & \textbf{48.16}                                           & \textbf{0.9922}                                                                                                                                                                                     \\

        \midrule

        DRSformer \textit{(Baseline)}
                       & \multirow[c]{2}{*}{\shortstack[c]{After TKSA and                                                                                                                                                                                                               \\before MSFN}}
                       & \multirow[c]{2}{*}{\shortstack[c]{Bottleneck/latent                                                                                                                                                                                                            \\STBs}}
                       & 41.20                                                    & 0.9889          & 32.13                  & 0.9324
                       & 35.30                                                    & 0.9638          & \textbf{48.48}         & 0.9919                                                                                                                                                   \\
        \cmidrule(lr){1-1}\cmidrule(lr){4-11}
        \hspace{0.8em}$+$ learnable LCT
                       &                                                          &                 &
        \textbf{41.25} & \textbf{0.9898}
                       & \textbf{32.31}                                           & \textbf{0.9369}
                       & \textbf{35.69}                                           & \textbf{0.9667}
                       & 48.43                                                    & \textbf{0.9922}                                                                                                                                                                                     \\

        \bottomrule
      \end{tabular}
    }
  \end{threeparttable}
\end{table*}

\subsection{Learnable LCT for Image Deraining}\label{EE-C}

\textbf{Cross-Backbone Deraining Results.}
We evaluate image deraining on Rain200L and Rain200H~\cite{Yang2017DeepJointRain}, DID-Data~\cite{Zhang2018DIDMDN}, and SPA-Data~\cite{Wang2019SPANet}. Rain200L and Rain200H each contain 1,800 training and 200 test pairs, while DID-Data contains 12,000 training and 1,200 test pairs. For SPA-Data, we use the benchmark version containing 638,492 training and 1,000 test pairs.

Training follows the backbone-specific configurations in Table~\ref{tab:deraining_training}. In DDN, the module is inserted after the ResNet-based parameter layers and before negative residual prediction. In PReNet, it is inserted after the residual
blocks and before the output convolution within each recurrent stage. In the Transformer backbones, it is inserted between MDTA
and GDFN in the Restormer bottleneck and between TKSA and MSFN in the DRSformer bottleneck, as illustrated in Fig.~\ref{fig:learnable_lct_derain_insertion}.

Table~\ref{tab:learnable_lct_deraining_results} reports higher PSNR values for the learnable LCT in 15 of the 16 backbone--dataset pairs and higher SSIM values in 15 of the 16 pairs. The mean baseline-relative PSNR gains across the four datasets are 0.49~dB for DDN, 1.02~dB for PReNet, 0.20~dB for Restormer, and 0.14~dB for DRSformer. Among the evaluated backbones, PReNet has the largest mean PSNR gain, including gains of 1.44~dB on Rain200L and 1.12~dB on Rain200H. Positive mean PSNR differences are observed for both the CNN and Transformer configurations, with larger values for the two evaluated CNN backbones.

Restormer has higher values for both metrics (PSNR and SSIM) on all four datasets. Two comparisons exhibit mixed metric changes: PReNet on DID-Data gains 0.35~dB in PSNR while SSIM decreases by 0.0016, and DRSformer on SPA-Data gains 0.0003 in SSIM while PSNR decreases by 0.05~dB. The observed effect therefore depends on the backbone, dataset, and metric.
Fig.~\ref{fig:derain_qualitative} shows qualitative results for DDN and PReNet. In the selected PReNet examples in panel~(b), the outputs obtained with the learnable LCT show fewer visible rain streaks while retaining local boundaries in the displayed regions.

\textbf{Ablation Studies.}
Table~\ref{tab:learnable_lct_deraining_ablation} compares five PReNet configurations on Rain200L and Rain200H. The S+F variant retains the spatial and Fourier responses but removes the LCT branch. The fixed-matrix variant restores the LCT branch and its real--imaginary coefficient projection while setting $\boldsymbol A_{\mathrm L}=\boldsymbol A_{\mathrm F}$. Both the fixed-matrix and learned-matrix/static-gate variants set $\boldsymbol\alpha=\mathbf0$ and learn $\boldsymbol\beta$, so the LCT-branch weight does not depend on the input. The latter variant learns $\boldsymbol A_{\mathrm L}$. The full model learns $\boldsymbol A_{\mathrm L}$ and uses the input-dependent weight defined in Section~\ref{FR-C}. 

Relative to the baseline, S+F increases PSNR by 0.07 and 0.24~dB on Rain200L and Rain200H, respectively, and its SSIM decreases by 0.0034 on Rain200L and increases by 0.0056 on Rain200H. Adding the fixed-matrix LCT branch to S+F gives PSNR differences of 0.48 and 0.29~dB and SSIM differences of 0.0016 and 0.0054. Under the same static-gate setting, learning the canonical matrix instead of fixing it at the Fourier configuration gives further PSNR differences of 0.36 and 0.38~dB and SSIM differences of 0.0061 and 0.0043. Relative to the learned-matrix/static-gate variant, the input-conditioned full model is higher by 0.53 and 0.21~dB in PSNR and by 0.0051 and 0.0044 in SSIM. The full model has the highest reported point estimates for both metrics on both datasets.

\begin{figure*}[t]
  \centering
  \begin{minipage}[t]{0.52\textwidth}
    \centering
    \includegraphics[width=\linewidth]{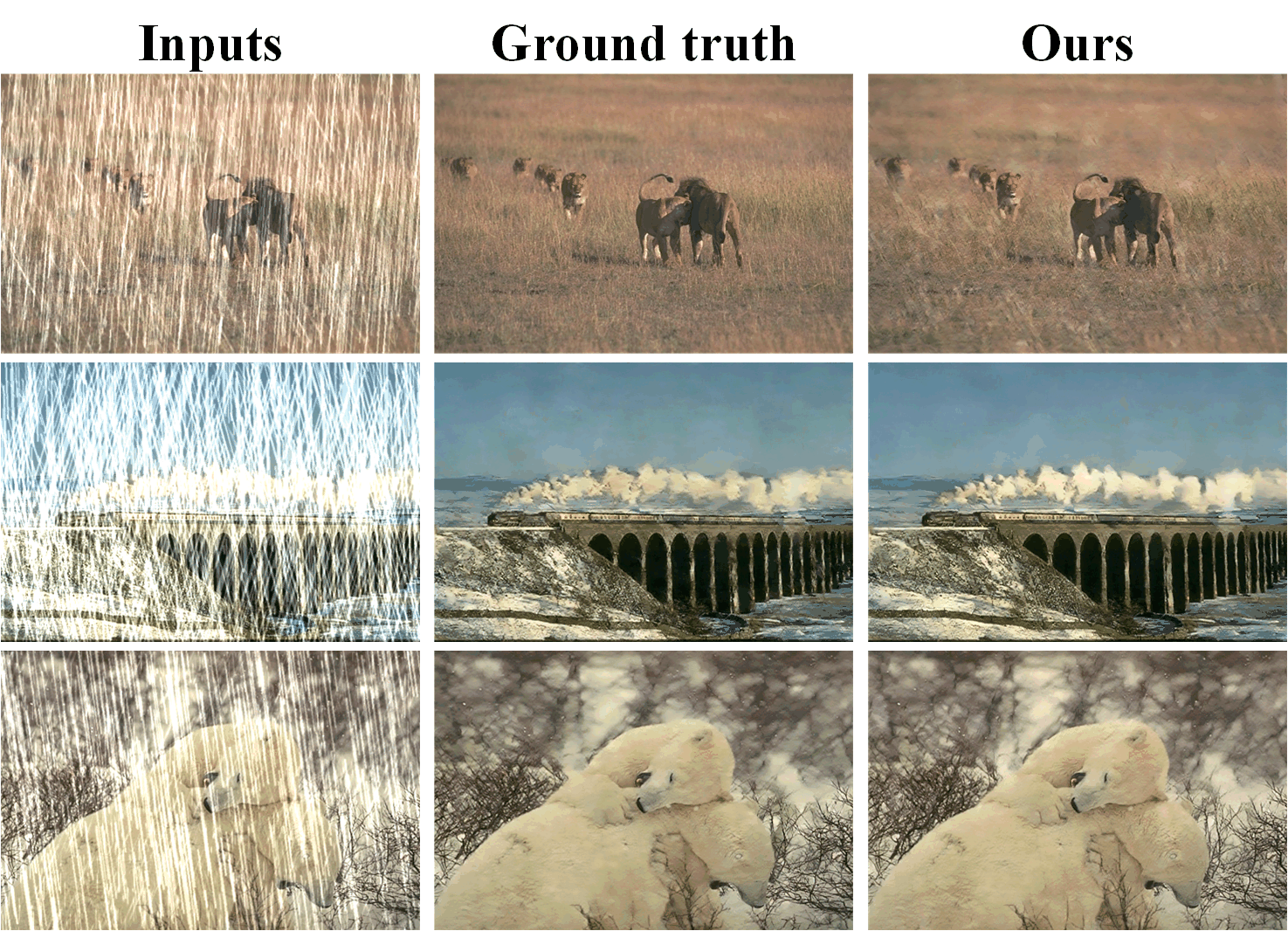}\\[-0.5mm]
    \footnotesize (a) DDN with learnable LCT
  \end{minipage}
  \hfill
  \begin{minipage}[t]{0.45\textwidth}
    \centering
    \includegraphics[width=\linewidth]{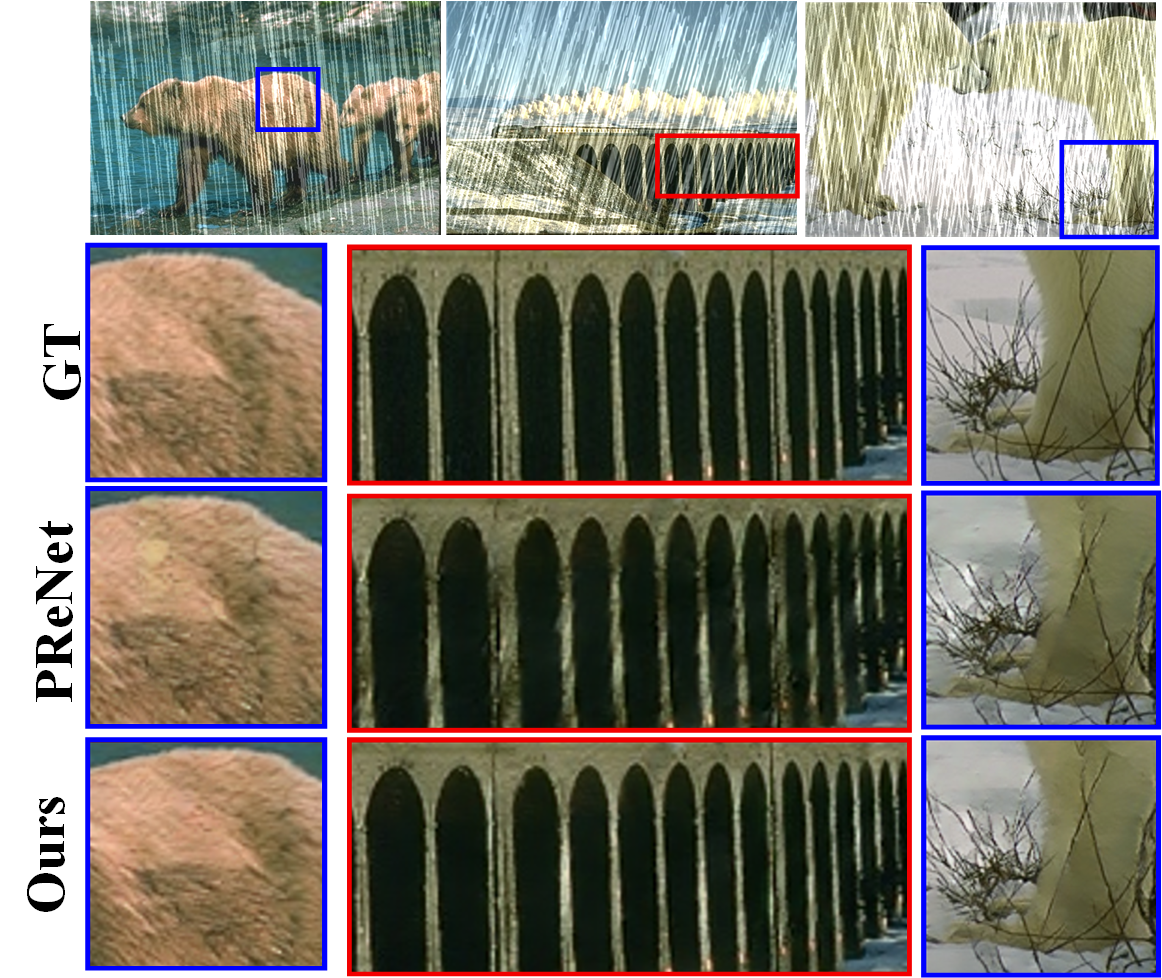}\\[-0.5mm]
    \footnotesize (b) PReNet comparison
  \end{minipage}
  \caption{Qualitative deraining results. (a) Rainy inputs, ground-truth images, and outputs from DDN with learnable LCT. (b) Rainy inputs and selected regions, followed by ground truth, PReNet outputs, and outputs from PReNet with learnable LCT.}
  \label{fig:derain_qualitative}
\end{figure*}

\begin{table}[H]
  \caption{Ablation of the LCT branch, canonical-matrix learning,
  and endpoint conditioning in PReNet.}
  \label{tab:learnable_lct_deraining_ablation}
  \centering
  \scriptsize
  \setlength{\tabcolsep}{3pt}
  \renewcommand{\arraystretch}{1.05}
  \begin{tabular*}{\columnwidth}{@{\extracolsep{\fill}}lcccc@{}}
    \toprule
    & \multicolumn{2}{c}{Rain200L}
    & \multicolumn{2}{c}{Rain200H} \\
    \cmidrule(lr){2-3}\cmidrule(lr){4-5}
    \textbf{Configuration}
    & \textbf{PSNR(dB)} & \textbf{SSIM}
    & \textbf{PSNR(dB)} & \textbf{SSIM} \\
    \midrule
    PReNet baseline
      & 37.21 & 0.9779 & 28.94 & 0.8958 \\
    S+F (no L)
      & 37.28 & 0.9745
      & 29.18 & 0.9014 \\
    Fixed $\boldsymbol A_{\mathrm F}$
      & 37.76 & 0.9761
      & 29.47 & 0.9068 \\
    Learned $\boldsymbol A$, static gate
      & 38.12 & 0.9822
      & 29.85 & 0.9111 \\
    Full model
      & \textbf{38.65} & \textbf{0.9873} & \textbf{30.06} & \textbf{0.9155} \\
    \bottomrule
  \end{tabular*}
  \par\vspace{0.5mm}
  \parbox{\columnwidth}{\scriptsize
  S, F, and L denote the spatial, Fourier, and forward-LCT branches.
  The fixed-matrix variant includes all three branches and sets
  $\boldsymbol A_{\mathrm L}=\boldsymbol A_{\mathrm F}$ while retaining
  the real--imaginary projection. The static-gate variant learns
  $\boldsymbol A_{\mathrm L}$ but fixes $\boldsymbol\alpha=\mathbf 0$
  and learns $\boldsymbol\beta$.}
\end{table}

\subsection{Computational Efficiency}\label{EE-D}

Table~\ref{tab:empirical_profiling} reports parameter counts, profiler FLOPs, and measured latency for the selected configurations. Within each pair, the input resolution and inference setting are unchanged, and the only architectural difference is the insertion of the specified module.

For U-Net, LCRT adds 0.20~M parameters (0.6\%), 5.01~G profiler FLOPs (9.8\%), and 2.11~ms latency (17.2\%). For PReNet, the learnable LCT module adds 2.73\% parameters, 1.02~G profiler FLOPs (5.0\%), and 1.11~ms latency (2.4\%).

\begin{table}[H]
  \renewcommand{\arraystretch}{1.1}
  \caption{Parameters, profiler FLOPs, and measured latency for selected backbones with and without the corresponding module.}
  \label{tab:empirical_profiling}
  \centering
  \scriptsize
  \begin{tabular*}{\linewidth}{@{\extracolsep{\fill}}l ccc}
    \toprule
    \textbf{Model} & \textbf{Params (M)} & \textbf{FLOPs (G)} & \textbf{Latency (ms/sample)} \\
    \midrule
    \multicolumn{4}{@{}l}{\textit{Crack segmentation: $3\times448\times448$}} \\
    U-Net                  & 32.34 & 50.89 & 12.24 \\
    U-Net + LCRT           & 32.54 & 55.90 & 14.35 \\
    \midrule
    \multicolumn{4}{@{}l}{\textit{Deraining: $3\times100\times100$}} \\
    PReNet                 & 0.169 & 20.22 & 45.80 \\
    PReNet + learnable LCT & 0.174 & 21.24 & 46.91 \\
    \bottomrule
  \end{tabular*}
\end{table}

PReNet has a 2.73\% parameter increase and a 2.4\% latency increase, and U-Net has a 0.6\% parameter increase and a 17.2\% latency increase. The U-Net profile accompanies differences between point estimates of 0.0167 in F1 and 0.0207 in IoU for the crack class, while the PReNet profile accompanies a mean PSNR difference of 1.02~dB over the four deraining datasets. The reported performance and cost changes are specific to these configurations and are not normalized measures of cost effectiveness. Absolute latency and FLOPs should not be compared across the two tasks because their input sizes and host networks differ.

\section{Conclusion}

We presented LLCT, a framework that learns canonical parameters within the LCT family for visual feature processing at selected network sites. Each site uses canonical matrices with determinant one, parameterized by rotation, scaling, and shearing, together with a gated residual path initialized to zero. The LCRT mode combines learned canonical coordinates with a regularized Riesz-type directional response. The learnable LCT mode fuses responses from spatial, Fourier, and learned linear canonical coordinates, and the endpoint responses determine the weight of the learned branch. The two above modes assign different computational roles to a common coordinate family without redesigning the host prediction heads or task objectives. For the continuous transform, the accompanying theorem establishes the parameter stability for inputs under admissible nondegenerate perturbations of the canonical matrices. The discrete modules use the regularized chirp multiplication and the FFT on finite grids.

On the four evaluated crack-segmentation backbones, the full LCRT configuration has higher reported F1 and crack-class IoU than the corresponding baselines and its variants. The learnable LCT configuration has higher values in 30 of the 32 paired PSNR and SSIM measurements across four deraining backbones and four datasets, with a maximum PSNR difference of 1.44~dB. On the ablation studies, learning the canonical matrix gives higher PSNR and SSIM than fixing it at the Fourier configuration under static gating, while the input-conditioned full model has the highest reported values among the five configurations. These results support the tested configurations of the two modes across the evaluated tasks and backbones. The two profiling cases reported in the main text show latency increases of 2.4--17.2\% for the selected configurations.

Future work will evaluate the framework across a broader range of tasks and architectures, and examine its behavior under diverse data and computational settings. Further theoretical analysis will investigate how learned canonical coordinates influence feature representation, model generalization, and computational efficiency.

\appendices

\section{Proof of LCT Parameter Stability}\label{aa}
\begin{proof} [Proof of Theorem \ref{thm:lct-parameter-stability}]
Throughout this proof, we use the principal branch for all complex square roots. 

For convenience, write
$\boldsymbol{x}:=(x_1,x_2),$
$\boldsymbol{u}:=(u_1,u_2),$ and
$\boldsymbol{b}:=(b_1,b_2),$
and let
$\phi_{\boldsymbol{A}}(\boldsymbol{x}):=
\frac{1}{2}\sum_{k=1}^{2}\frac{a_k}{b_k}x_k^2$ and 
$\psi_{\boldsymbol{A}}(\boldsymbol{u})
:=\frac{1}{2}\sum_{k=1}^{2}\frac{d_k}{b_k}u_k^2.$
For each $m\in\mathbb N$, define
$\phi_{\boldsymbol{A}^{(m)}}(\boldsymbol{x})
:=\frac{1}{2}\sum_{k=1}^{2}
\frac{a_k^{(m)}}{b_k^{(m)}}x_k^2$ and
$\psi_{\boldsymbol{A}^{(m)}}(\boldsymbol{u})
:=\frac{1}{2}\sum_{k=1}^{2}
\frac{d_k^{(m)}}{b_k^{(m)}}u_k^2.$
For $\boldsymbol{b}\in(\mathbb R\setminus\{0\})^2$, let
$ D_{\boldsymbol{b}}h (\boldsymbol{u})
:=|b_1b_2|^{-1/2}h(\frac{u_1}{b_1},\frac{u_2}{b_2}).$
The definitions of the Fourier transform and the LCT kernel give
\begin{equation}
\label{eq:stability-lct-decomposition}
\mathscr{L}_{\boldsymbol{A}}(f)
=\chi(\boldsymbol{b})e^{i\psi_{\boldsymbol{A}}}
D_{\boldsymbol{b}}\mathscr{F}\bigl(e^{i\phi_{\boldsymbol{A}}}f\bigr),
\end{equation}
where
$\chi(\boldsymbol{b}):=\prod_{k=1}^{2}
\frac{|b_k|^{1/2}}{(ib_k)^{1/2}}$
and $|\chi(\boldsymbol{b})|=1$. With the unitary Fourier convention
of the main article, the normalization in
\eqref{eq:stability-lct-decomposition} follows from
$(2\pi)\prod_{k=1}^{2}(i2\pi b_k)^{-1/2}
=\chi(\boldsymbol{b})|b_1b_2|^{-1/2}.$
A change of variables also yields
\begin{equation}\label{eq:stability-dilation-isometry}
\|D_{\boldsymbol{b}}h\|_{L^2(\mathbb R^2)}
=\|h\|_{L^2(\mathbb R^2)}.
\end{equation}
 Define
$g_m:=e^{i\phi_{\boldsymbol{A}^{(m)}}}f$ and $g:=e^{i\phi_{\boldsymbol{A}}}f.$
Since
$\frac{a_k^{(m)}}{b_k^{(m)}}
\longrightarrow\frac{a_k}{b_k},\
k\in\{1,2\},$
it follows that $g_m(\boldsymbol{x})\to g(\boldsymbol{x})$ as $m\to\infty$ for every
$\boldsymbol{x}\in\mathbb R^2$. In addition,
$|g_m(\boldsymbol{x})-g(\boldsymbol{x})|^2
\leq 4|f(\boldsymbol{x})|^2.$
Thus, by the Lebesgue dominated convergence theorem, we obtain
\begin{equation}
\label{eq:stability-input-chirp-convergence}
\lim_{m\to\infty}\|g_m-g\|_{L^2(\mathbb R^2)}=0.
\end{equation}
\par\noindent Define
$h_m:=\mathscr{F}(g_m)$ and
$h:=\mathscr{F}(g).$
By Plancherel's theorem and
\eqref{eq:stability-input-chirp-convergence},
\begin{align}
\lim_{m\to\infty}
\|h_m-h\|_{L^2(\mathbb R^2)}
&=\lim_{m\to\infty}\|\mathscr F(g_m-g)\|_{L^2(\mathbb R^2)} \nonumber \\
&=\lim_{m\to\infty} \|g_m-g\|_{L^2(\mathbb R^2)}=0.\label{eq:stability-fourier-convergence}
\end{align}
It remains to show that
\begin{equation}
\label{eq:stability-dilation-convergence}
D_{\boldsymbol{b}^{(m)}}h_m
\longrightarrow
D_{\boldsymbol{b}}h
\quad\text{in }L^2(\mathbb R^2),
\end{equation}
where
$\boldsymbol{b}^{(m)}
:=(b_1^{(m)},b_2^{(m)}).$
Equation~\eqref{eq:stability-dilation-isometry} gives
\begin{align*}
&\|D_{\boldsymbol{b}^{(m)}}h_m-D_{\boldsymbol{b}}h\|_{L^2(\mathbb R^2)}\\
&\quad\leq\|D_{\boldsymbol{b}^{(m)}}(h_m-h)\|_{L^2(\mathbb R^2)}
+\|D_{\boldsymbol{b}^{(m)}}h-D_{\boldsymbol{b}}h\|_{L^2(\mathbb R^2)}\\
&\quad=\|h_m-h\|_{L^2(\mathbb R^2)}+
\|D_{\boldsymbol{b}^{(m)}}h-D_{\boldsymbol{b}}h\|_{L^2(\mathbb R^2)}.
\end{align*}
The first term tends to zero by
\eqref{eq:stability-fourier-convergence}. To treat the second term,
observe that the multiplication by a quadratic chirp preserves the
Schwartz space and that the Fourier transform maps
$\mathscr{S}(\mathbb R^2)$ onto itself. Hence, $h\in\mathscr{S}(\mathbb R^2).$
Since $\boldsymbol{b}^{(m)}\to\boldsymbol{b}$ and $b_1b_2\ne0$,
we deduce that there exist constants $\delta,M>0$ such that, for all sufficiently
large $m$,
$\delta\leq|b_k^{(m)}|\leq M,\ k\in\{1,2\}.$
Thus,
$D_{\boldsymbol{b}^{(m)}}h(\boldsymbol{u})
\to D_{\boldsymbol{b}}h(\boldsymbol{u})$
for every $\boldsymbol{u}\in\mathbb R^2$. The bound
$|b_k^{(m)}|\leq M$ also implies
$|(\frac{u_1}{b_1^{(m)}},
\frac{u_2}{b_2^{(m)}})|\geq
\frac{|\boldsymbol{u}|}{M}.$
Hence, for any fixed integer $N>1$, the rapid decay of $h$ gives
\begin{align*}
|D_{\boldsymbol{b}^{(m)}}h(\boldsymbol{u})|
\leq\delta^{-1}C_N\left[1+
\left|\left(\frac{u_1}{b_1^{(m)}},
\frac{u_2}{b_2^{(m)}}\right)
\right|\right]^{-N}
\leq C_N'(1+|\boldsymbol{u}|)^{-N},
\end{align*}
uniformly for all sufficiently large \(m\). The same estimate holds
for \(D_{\boldsymbol{b}}h\). Therefore,
\begin{align*}
|D_{\boldsymbol{b}^{(m)}}h(\boldsymbol{u})
-D_{\boldsymbol{b}}h(\boldsymbol{u})|^2&\leq
2|D_{\boldsymbol{b}^{(m)}}h(\boldsymbol{u})|^2
+2|D_{\boldsymbol{b}}h(\boldsymbol{u})|^2\\&\leq C_N''(1+|\boldsymbol{u}|)^{-2N}.
\end{align*}
For $N>1$,
$(1+|\boldsymbol{u}|)^{-2N}\in L^1(\mathbb R^2).$
The Lebesgue dominated convergence theorem therefore yields
\begin{align*}
\lim_{m\to\infty}
\|D_{\boldsymbol{b}^{(m)}}h-D_{\boldsymbol{b}}h\|_{L^2(\mathbb R^2)}^2=0.\end{align*}
Thus, $\lim_{m\to\infty}
\|D_{\boldsymbol{b}^{(m)}}h-D_{\boldsymbol{b}}h\|_{L^2(\mathbb R^2)}
=0.$
This proves \eqref{eq:stability-dilation-convergence}.
Define
$v_m:=D_{\boldsymbol{b}^{(m)}}h_m$ and
$v:=D_{\boldsymbol{b}}h.$
Then \(v_m\to v\) in \(L^2(\mathbb R^2)\) as $m\to\infty$. Since
$\frac{d_k^{(m)}}{b_k^{(m)}}
\longrightarrow\frac{d_k}{b_k},\
k\in\{1,2\},$ we also infer that
$\psi_{\boldsymbol{A}^{(m)}}(\boldsymbol{u})
\longrightarrow\psi_{\boldsymbol{A}}(\boldsymbol{u})$ as $m\to\infty$ for every $\boldsymbol{u}\in\mathbb R^2$. Consequently,
\begin{align*}
&\|e^{i\psi_{\boldsymbol{A}^{(m)}}}v_m
-e^{i\psi_{\boldsymbol{A}}}v\|_{L^2(\mathbb R^2)}\\
&\quad\leq\|v_m-v\|_{L^2(\mathbb R^2)}
+\|\bigl(e^{i\psi_{\boldsymbol{A}^{(m)}}}
-e^{i\psi_{\boldsymbol{A}}}\bigr)v
\|_{L^2(\mathbb R^2)} \to 0
\end{align*}
as $m\to\infty$. Indeed, since
$\left|e^{i\psi_{\boldsymbol{A}^{(m)}}}
-e^{i\psi_{\boldsymbol{A}}}\right|
\leq2$
and $v\in L^2(\mathbb R^2)$, the Lebesgue dominated convergence theorem then gives the above desired limit. 
Finally, under the principal-branch convention,
$\frac{|b_k|^{1/2}}{(ib_k)^{1/2}}
=e^{-i\pi\operatorname{sgn}(b_k)/4}.$
Since \(b_k^{(m)}\to b_k\ne0\), \(b_k^{(m)}\) and \(b_k\) have the
same sign for all sufficiently large \(m\). Hence,
$\chi\bigl(\boldsymbol{b}^{(m)}\bigr)
=\chi(\boldsymbol{b})$
for all sufficiently large \(m\). Combining this identity with
\eqref{eq:stability-lct-decomposition}, we obtain
\begin{align*}
&\|\mathscr{L}_{\boldsymbol{A}^{(m)}}(f)
-\mathscr{L}_{\boldsymbol{A}}(f)
\|_{L^2(\mathbb R^2)}\\
&\quad\leq\|e^{i\psi_{\boldsymbol{A}^{(m)}}}v_m
-e^{i\psi_{\boldsymbol{A}}}v
\|_{L^2(\mathbb R^2)}
+\left|\chi\left(\boldsymbol{b}^{(m)}
\right)-\chi(\boldsymbol{b})
\right|	\|v\|_{L^2(\mathbb R^2)}\\
&\quad\to0.
\end{align*}
Since $\Omega$ is a metric subspace of
$(\mathbb{R}^{2\times2})^2$, the established sequential convergence
is equivalent to continuity of
$\boldsymbol{A}\mapsto\mathscr{L}_{\boldsymbol{A}}(f)$ on $\Omega$.
This completes the proof.
\end{proof}

\section{Finite-Grid LCT Realization and Parameter Sensitivity}\label{bb}

The LCRT results used in Section~\ref{MF-B} are taken from~\cite{Yang2026LCRTImageEdgeDetection} and are not repeated here. This appendix records the finite-grid LCT realization with a parameter-sensitivity bound.

For one tensor axis of length $N$ and for any $n,m\in\{0,\ldots,N-1\},$ let
\begin{align*}
 x_n:=\left(n-\left\lfloor\frac N2\right\rfloor\right)\Delta x
 \quad \mathrm{and}\quad 
 \xi_m:=\frac{2\pi\left(m-\left\lfloor N/2\right\rfloor\right)}{N\Delta x}.
\end{align*}
Let $F_N$ be the unitary DFT on these centered grids, and define
$$D_x(r):=\operatorname{diag}(e^{irx_n^2/2})\quad\text{and}\quad D_\xi(r):=\operatorname{diag}(e^{ir\xi_m^2/2}).$$
For $A\in\mathrm{SL}(2,\mathbb R)$ with $b_A\neq0$, write
$$A:=\begin{pmatrix}
	a_A&b_A\\
	c_A&d_A
\end{pmatrix},$$
$$p_A:=\frac{a_A-1}{b_A}, \ {\rm and}\ 
  q_A:=\frac{d_A-1}{b_A}. $$
Then define
\begin{equation}\label{eq:finite_grid_lct}
T_A:=D_x(q_A)F_N^*D_\xi(-b_A)F_ND_x(p_A)\ \mathrm{and}\ T_{\boldsymbol A}:=T_{A_2}\otimes T_{A_1}.
\end{equation}
For a second admissible matrix
$B=\begin{pmatrix}a_B&b_B\\ c_B&d_B\end{pmatrix}$ with $b_B\neq0$,
we use the analogous notation
$p_B:=\frac {a_B-1}{b_B}$, $q_B:=\frac {d_B-1}{b_B}$, and $T_B$ as in \eqref{eq:finite_grid_lct} with $A$ replaced by $B$. In two dimensions,
$\boldsymbol B=(B_1,B_2)$ and $T_{\boldsymbol B}:=T_{B_2}\otimes T_{B_1}$.

\begin{proposition}\label{prop:finite-grid-realization}
Each factor in \eqref{eq:finite_grid_lct} is unitary. Consequently,
\begin{equation*}
T_A^{-1}=T_A^*\quad and\quad
T_{\boldsymbol A}^{-1}=T_{\boldsymbol A}^*.
\end{equation*}
For the grids defined above, let
\begin{equation*}
x_{\max}:=\max_{0\le n<N}|x_n|
\quad and \quad
\xi_{\max}:=\max_{0\le m<N}|\xi_m|.
\end{equation*}
Then, for any two admissible matrices $A$ and $B$ on the same grid,
\begin{align}\label{eq:finite_grid_sensitivity}
&\|T_A-T_B\|_2\notag\\
&\quad\le\frac12\left[x_{\max}^2\bigl(|p_A-p_B|+|q_A-q_B|\bigr)
+\xi_{\max}^2|b_A-b_B|\right].
\end{align}
The tensor-product realization satisfies the corresponding two-dimensional estimate. Moreover, for the real--imaginary readout in \eqref{eq:transform_readouts},
\begin{equation}\label{eq:finite_grid_readout_sensitivity}
\left\|\rho_{\mathrm L}^{(s)}(T_{\boldsymbol A}z)-\rho_{\mathrm L}^{(s)}(T_{\boldsymbol B}z)\right\|_2
\le \|W_{\mathrm L}^{(s)}\|_2\,\|T_{\boldsymbol A}-T_{\boldsymbol B}\|_2\,\|z\|_2,
\end{equation}
where $W_{\mathrm L}^{(s)}$ is the linear part of $P_{\mathrm L}^{(s)}$.
\end{proposition}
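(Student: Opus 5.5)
Unitarity comes first. Each $D_x(r)$ and $D_\xi(r)$ is diagonal with unimodular entries $e^{irx_n^2/2}$ or $e^{ir\xi_m^2/2}$, since $r,x_n,\xi_m$ are real, so each is unitary. $F_N$ is unitary by assumption, and hence so is $F_N^*$. A product of unitary matrices is unitary, so $T_A^{-1}=T_A^*$. For the tensor product I will use $(U\otimes V)^*(U\otimes V)=U^*U\otimes V^*V=I$, which gives $T_{\boldsymbol A}^{-1}=T_{\boldsymbol A}^*$.

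For the sensitivity bound \eqref{eq:finite_grid_sensitivity}, I will telescope the five-factor product. Writing $T_A=D_x(q_A)F_N^*D_\xi(-b_A)F_ND_x(p_A)$, I change one variable factor at a time:
\begin{align*}
T_A-T_B&=\bigl[D_x(q_A)-D_x(q_B)\bigr]F_N^*D_\xi(-b_A)F_ND_x(p_A)\\
&\quad+D_x(q_B)F_N^*\bigl[D_\xi(-b_A)-D_\xi(-b_B)\bigr]F_ND_x(p_A)\\
&\quad+D_x(q_B)F_N^*D_\xi(-b_B)F_N\bigl[D_x(p_A)-D_x(p_B)\bigr].
\end{align*}
All the untouched factors are unitary and have spectral norm $1$, so each term is bounded by the norm of its bracketed diagonal difference. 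For a diagonal matrix the spectral norm is the maximal entry modulus. The elementary inequality $|e^{i\alpha}-e^{i\beta}|\le|\alpha-\beta|$ then gives $\|D_x(r)-D_x(r')\|_2\le \tfrac12 x_{\max}^2|r-r'|$, and the analogous bound holds for $D_\xi$ with $\xi_{\max}^2$. Summing the three terms yields \eqref{eq:finite_grid_sensitivity}.

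For the two-dimensional estimate I will write
\[
T_{A_2}\otimes T_{A_1}-T_{B_2}\otimes T_{B_1}=(T_{A_2}-T_{B_2})\otimes T_{A_1}+T_{B_2}\otimes(T_{A_1}-T_{B_1}),
\]
and use $\|U\otimes V\|_2=\|U\|_2\|V\|_2$ together with unitarity. This gives $\|T_{\boldsymbol A}-T_{\boldsymbol B}\|_2\le\|T_{A_1}-T_{B_1}\|_2+\|T_{A_2}-T_{B_2}\|_2$, where each summand is controlled by the one-dimensional bound on its own axis grid.

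For the readout bound \eqref{eq:finite_grid_readout_sensitivity}, the bias in $P_{\mathrm L}^{(s)}$ cancels in the difference. The map $Z\mapsto[\operatorname{Re}Z,\operatorname{Im}Z]_{\mathrm{ch}}$ is real-linear and preserves Euclidean norm, since $|\operatorname{Re}w|^2+|\operatorname{Im}w|^2=|w|^2$ entrywise. Hence
\[
\|\rho_{\mathrm L}^{(s)}(T_{\boldsymbol A}z)-\rho_{\mathrm L}^{(s)}(T_{\boldsymbol B}z)\|_2\le\|W_{\mathrm L}^{(s)}\|_2\,\|(T_{\boldsymbol A}-T_{\boldsymbol B})z\|_2\le\|W_{\mathrm L}^{(s)}\|_2\,\|T_{\boldsymbol A}-T_{\boldsymbol B}\|_2\,\|z\|_2.
\]
Here I read $W_{\mathrm L}^{(s)}$ as acting per pixel, which is the case for a $1\times1$ convolution, and I read $T_{\boldsymbol A}$ as acting per channel; both act blockwise, so the spectral norms are unchanged.

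I expect the main obstacle to be bookkeeping rather than analysis. The delicate point is the multichannel and batch interpretation of the norms, which is what justifies treating $W_{\mathrm L}^{(s)}$ and $T_{\boldsymbol A}$ as separate operators whose spectral norms multiply.
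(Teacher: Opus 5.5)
Your proposal is correct and follows the paper's proof. The paper uses the same steps: unitarity of the diagonal chirp and DFT factors, telescoping combined with $|e^{it}-e^{is}|\le|t-s|$, tensor-product telescoping for the two-dimensional estimate, and norm preservation under real--imaginary concatenation with cancellation of the projection bias; you spell out the explicit telescoping identities and the blockwise reading of the spectral norms that the paper's one-paragraph proof leaves implicit.
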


\begin{proof}
Each factor in \eqref{eq:finite_grid_lct} is unitary. Telescoping \eqref{eq:finite_grid_lct} and using $|e^{it}-e^{is}|\le|t-s|$ give \eqref{eq:finite_grid_sensitivity}, while tensor-product telescoping gives the two-dimensional estimate. Norm preservation under real--imaginary concatenation, together with cancellation of the projection bias in a difference, yields \eqref{eq:finite_grid_readout_sensitivity}. When the numerical floor in Subsection~\ref{FR-A} is inactive, this realization agrees with the implementation. The hard floor gives its piecewise-differentiable numerical counterpart, which completes the proof.
\end{proof}

\bibliographystyle{assets/IEEEtran}
\bibliography{assets/referencesIEEE}

\end{document}